\newtheorem{thm}{Theorem}
\newtheorem{cor}[thm]{Corollary}
\date{}
\RenewDocumentCommand{\title}{om}{%
   \IfNoValueTF{#1}
     {\gdef\shorttitle{Fixed point theorems for topological contractions}}
     {\gdef\shorttitle{#1}}%
   \gdef\@title{#2}%
}
\title{Fixed point theorems for topological contractions and the Hutchinson operator}
\address{Micha{\l} Morayne: Faculty of Fundamental Problems of Technology, Wroc{\l}aw University of Science and Technology, 50-370 Wrocław, Poland.}
\address{Robert Ra{\l}owski: Faculty of Pure and Applied Mathematics, Wrocław University of Science and Technology, 50-370 Wroc{\l}aw, Poland.}
\author{Micha{\l} Morayne}
\author{Robert Ra{\l}owski}
\thanks{The work of Robert Rałowski has been partially financed by  grants 8211204601, 8211104160, MPK: 9120730000 from the Faculty of Pure and Applied Mathematics, Wrocław University of Science and Technology}
\email{michal.morayne@pwr.edu.pl}
\email{robert.ralowski@pwr.edu.pl}
\keywords{fixed point; $T_1$ space; compact space; attractor, contraction, Hutchinson operator}
\subjclass[2010]{54D10, 54D30, 54E52, 54H25, 54H20}
\begin{document}

\begin{abstract}
For a topological space $X$ a topological contraction on $X$ is a closed mapping $f:X\to X$ such that for every open cover of $X$ there is a positive integer $n$ such that the image of the space $X$ via the $n$th iteration of $f$ is a subset of some element of the cover.
Every topological contraction in a compact $T_1$ space has a unique fixed point. As in the case of metric spaces and the classical Banach fixed point theorem, this analogue of Banach's theorem is true not only in compact but also in complete (here in the sense of \v{C}ech) $T_1$ spaces.  We introduce a notion of weak topological contraction and in Hausdorff spaces we prove the existence of a unique fixed point for such continuous and closed mappings without assuming completeness or compactness of the space considered. These theorems are applied to prove existence of fixed points for mappings on compact subsets of linear spaces with weak topologies and for compact monoids. We also prove some fixed point results for $T_1$ locally Hausdorff spaces and, introduced here, peripherally Hausdorff spaces. An iterated function system on a topological space, IFS, is a finite family of closed mappings from the space into itself. It is contractive if for every open cover of $X$ for some positive integer $n$ the image of $X$ via a composition of $n$ mappings from the IFS is contained in an element of the cover. We show that in $T_1$ compact topological spaces the Hutchinson operator of a contractive IFS 
may not be closed as the mapping in the hyperspace of closed subsets of the space. Nevertheless, the Hutchinson operator of a contractive IFS has always a unique fixed point.
\end{abstract}

\maketitle

\section{Introduction} 
The classical Banach contraction theorem states that Lipschitz mappings with Lipschitz constants smaller than $1$ on a complete metric space, thus in particular on a compact metric space, to the same space have a unique fixed point. In \cite{MMRR} this theorem has been extended to  $T_1$ spaces, where the notion of a Lipschitz contraction, specific for metric spaces, were generalized by a notion of topological contraction. As \v{C}ech completeness, or actually a condition characterizing its original definition (e.g. \cite{E}) extends the notion of completeness to topological spaces one can ask a natural question if the above mentioned analogue of Banach's fixed point theorem for $T_1$ compact spaces holds in $T_1$ \v{C}ech complete spaces. It turns out that it does and it is a content of Theorem \ref{Banach-T1-complete}. One also could expect that in the case of $T_2$ spaces some assumptions of an analogous theorem can be weekend. We prove two such fixed point theorems for weaker notions of contractions, so called weak topological contractions with the assumptions that they are continuous (Theorem \ref{T2-continuous-fixed}) or closed (Theorem \ref{T2-closed-fixed}); in this second case we require that the space be first countable. These theorems are applied to obtain a fixed point theorems for mappings on compact subsets of linear  spaces with weak topologies.

In this article we consider special $T_1$ spaces which are more general than the Hausdorff spaces, the locally Hausdorff $T_1$ spaces. While the fixed point theorem for continuous weak topological contractions extends to this class of spaces the theorem for closed weak topological contractions does not. We also introduce a class of $T_1$ peripherally Hausdorff spaces. Also for these class of spaces and for a stronger notion of contraction, a weak+ topological contraction, we obtain a fixed point theorem           

An iterated function system on a topological space $X$, an IFS, is a finite family of closed mappings from the space into itself (we do not assume their continuity here). It is contractive if for every open cover of $X$ for some positive integer $n$ the image of $X$ via a composition of $n$ mappings from the IFS is contained in an element of the cover. Given an IFS the Hutchinson operator generated by this IFS maps a set to the union of its images via all elements of the IFS. It was shown in \cite{MMRR} that in a $T_1$ compact space $X$ the Hutchinson operator generated by a contractive IFS, if it is closed, is a topological contraction on the hyperspace of all nonempty closed subsets of $X$. Applying the analogue of the Banach fixed point theorem to the hyperspace and such an IFS and its Hutchinson operator we conclude that it has a unique fixed point which is also called an attractor.
We show here that in a $T_1$ compact topological space the Hutchinson operator of a contractive IFS 
may not be closed in the hyperspace, but, nevertheless, the Hutchinson operator of a contractive IFS has always a unique fixed point.

The paper is organized as follows. In the second section we recall the basic notations and terminology. In the third section we state and prove an analogue of the Banach fixed point theorem for $T_1$ \v{C}ech complete spaces. In the fourth section we prove two analogues of the Banach fixed point theorem for Hausdorff spaces. In the fifth section as an application of the results in the previous section we state and prove fixed point theorems for compact subsets of linear spaces with weak topologies and for compact monoids. In the next two sections we consider $T_1$ locally Hausdorff spaces and $T_1$ peripherally Hausdorff spaces and prove fixed point theorems for these classes of spaces. The last eighth section is devoted to the Hutchinson operator in $T_1$ compact spaces.

\section{Notations and terminology}\label{terminology}

All topological spaces considered in this article are assumed to be non-empty. 
A Tychonoff topological space is \v{C}ech complete if it is a $G_\delta$ subset of any of its compactifications. An equivalent  definition says that a Tychonoff topological space is \v{C}ech complete 
if there exists a sequence $(\mathcal{U}_i)$ of its open covers such that if a sequence $(F_m)_m$ of closed sets is centered and for each $i$ there exists $m_i$ such that $F_{m_i}$ is a subset of some member of $\mathcal{U}_i$, then the intersection $\bigcap_m F_m$ is nonempty (\cite{E}). The condition that the space is Tychonoff in the second definition is added to retain this condition from the first definition. However the second definition makes sense if we omit this assumption. Further we call a topological space {\it \v{C}ech complete} if it satisfies the second definition without assuming that it is Tychonoff. 

A metric space $(X,d)$ is an {\it Atsuji space} if for each open cover $\mathcal{U}$ of $X$  there exists $\varepsilon>0$ such that for every $x\in X$ the open ball $B_d(x,\varepsilon)$ (with radius $\varepsilon$ and center at $x$)  is contained in some element $U$ of the cover (comp. \cite{A}). It is a classical fact, called the Lebesgue number lemma, that each compact metric space is an Atsuji space (\cite{M}).  

The class of all ordinal numbers will be denoted by On.

As usual $\mathbb{N}$ denotes the set of positive integers, $\mathbb{N}_0:=\mathbb{N}\cup\{0\}$, and $\mathbb{Z}$ denotes the set of all integers.

A topological space $X$ is {\it locally Hausdorff} if every point of the space has an open  neighbourhood $U$ such that the topology of $X$ restricted to $U$ is Hausdorff. 

A topological space $X$ is $1$-Hausdorff if it is $T_1$ and for every point $x\in X$ the topology of $X$ resticted to the set
consisting of $x$ and of those points $y$ that cannot be separated from $x$ by two open disjoint neighbourhoods $U\ni x$, $V\ni y$, namely the set 
$$[x]:=\bigcap\{\overline{U}:U\;\textrm{is\;an\;open\;neigbourhood\;of}\;x\},$$
is Hausdorff. We extend this definition to so called $\alpha$-Hausdorff spaces, $\alpha\in\textrm{On}$. We define these classes by recursion with respect to ordinals. 
A topological space is a {\it $0$-Hausdorff space} space if it is a Hausdorff space. Let us assume that $0<\alpha\in\textrm{On}$ and $\beta$-Hausdorff spaces have been defined for all $\beta<\alpha$.
A topological $T_1$ space $X$ is an {\it $\alpha$-Hausdorff-space}
if for every $x\in X$ the space $[x]$ with the relative topology inherited from the whole space $X$ is a {\it $\beta_x$-Hausdorff space}
where $\beta_x<\alpha$ ($\beta_x$ depends on $x$ here). 
If $X$ is an $\alpha$-Hausdorff space for some ordinal $\alpha$, then $X$ will be called a {\it peripherally Hausdorff} space. 
It is easy to notice that the classes of $\alpha$-Hausdorff spaces form a non-decreasing trans-finite sequence 
with respect to ordinals $\alpha$.

Let  $X$ be a peripherally Hausdorff space. We define a {\it Hausdorff-rank} of $X$: 
$$\textrm{rank}_{T_2}(X)=\min\{\alpha:X\;\textrm{is\;an}\;\alpha\textrm{-}\textrm{Hausdorff}\;\textrm{space}\}.$$

In Section \ref{alfa-H} we present some basic properties of $\alpha$-Hausdorff spaces.

Let $X$ be a fixed set. Let $f:X\to X$. Then $f^n$ denotes the composition of $n$ copies of $f$: $f^n:=f\circ f\circ \ldots \circ f$; $f^0$ denotes the identity mapping on $X$.

Let $X$ be a topological space. A mapping $f:X\to X$ is called {\it a topological contraction} if $f$ is a closed mapping and for every open cover $\mathcal{U}=\{U_\alpha:\alpha\in\Lambda\}$ there exist $n\in \mathbb{N}$ and $\alpha\in\Lambda$ such that $f^n[X]\subseteq U_\alpha$
 (\cite{MMRR}). We do not require here that $f$ be continuous. If $X$ is a compact $T_1$ space then (Theorem 4 in \cite{MMRR}), $f:X\to X$ is a topological contraction if and only if $f$ is closed and

\vspace{0.2 cm}

\noindent $(\star)$ for each two different points $x,y\in X$ there exists $n\in\mathbb{N}$ such that
$f^n[X]\subseteq X\setminus\{x\}$ or $f^n[X]\subseteq X\setminus\{y\}$.

\vspace{0.2 cm}

If $X$ is a topological space then a mapping $f:X\to X$ will be called {\it a weak topological contraction}  if for every open cover $\mathcal{U}=\{U_\alpha:\alpha\in\Lambda\}$ of $X$ and every pair of points $x,y\in X$ there exist $n\in \mathbb{N}_0$ and $\alpha\in\Lambda$ such that $f^n[\{x,y\}]\subseteq U_\alpha$ (\cite{MMRR}). In this definition we do not require continuity or closedness of $f$. We do so because we will consider both versions, continuous and closed, of weak topological contractions.

If $X$ is a topological space then a mapping $f:X\to X$ will be called {\it a weak+ topological contraction}  if for every open cover $\mathcal{U}=\{U_\alpha:\alpha\in\Lambda\}$ of $X$ there exist $n_
0\in \mathbb{N}_0$ and $\alpha\in\Lambda$ such that $f^n[\{x,y\}]\subseteq U_\alpha$ for all $n\geq n_0$. Also in this definition we do not require continuity or closedness of $f$.

For a topological space $X$, an {\it iterated function system} (for short: IFS) is any finite family  $\mathcal{F}=\{f_1,\ldots,f_m\}$ of closed mappings from $X$ to $X$. We say that an IFS $\mathcal{F}=\{f_1,\ldots,f_m\}$ is {\it contractive} if 
for any open cover $\mathcal{U}=\{U_\alpha:\alpha\in\Lambda\}$ of $X$ there exists $n\in\mathbb{N}$ such that for any sequence $(i_1,\ldots,i_n)$, $1\leq i_j\leq m$, the set $f_{i_1}\circ\ldots\circ f_{i_n}[X]$ is contained in some element $U_\alpha$ of the cover   $\mathcal{U}$.  Thus $f:X\to X$ is a topological contraction if the one element IFS consisting of the mapping $f$ is contractive. 

Let $X$ be a $T_1$ topological space. Let $2^X$ be the family of all closed nonempty 
subsets of $X$.  The Vietoris topology on $2^X$ is generated by the basis consisting of all sets of the form 
$$S(V_0;V_1,\ldots,V_k):=\{K\in2^X:K\subseteq V_0, K\cap U_i\neq\emptyset,1\leq i\leq k\},$$
where $k\in\mathbb{N}$ and $V_0,V_1,\ldots,V_k$ are open subsets of $X$. The space $2^X$ with the Vietoris topology is called the {\it hyperspace} of $X$. It is known that if $X$ is a $T_1$ compact space then so is its hyperspace $2^X$ (\cite{Mi}). 

Let $\mathcal{F}=\{f_1,\ldots,f_m\}$ be an IFS on a $T_1$ space $X$. The {\it Hutchinson operator $F: 2^X \to 2^X$ induced by $\mathcal{F}$} 
is defined by the formula 
$$F(K):=\bigcup_{i=1}^mf_{i}[K]$$
(see \cite{H}). If $K\in2^X$ is a fixed point of the Hutchinson operator induced by an IFS $\mathcal{F}$ we call $K$ an {\it attractor} of  $\mathcal{F}$.

The original theorems proved in this paper will be numbered while the theorems cited from other sources will be lettered.

\section{An analogue of the Banach fixed point theorem in $T_1$ \v{C}ech complete spaces}

The following theorem was proven in  \cite{MMRR} (Theorem 3 therein).

\vspace{0.4 cm}

\noindent{\bf Theorem A.} {\it
If $X$ is a $T_1$ compact space, $f:X\to X$ is a topological contraction, then 
there exists a unique fixed point for the mapping $f$.} 

\vspace{0.4 cm}

Theorem A generalizes the Banach fixed point theorem for Lipschitz contractions in metric compact spaces. However, the classical version of this theorem is stated for complete metric spaces. Here we will give a $T_1$ analogue of this theorem which generalizes the classical theorem for bounded complete metric spaces.    


\begin{thm}\label{Banach-T1-complete}
If $X$ is a $T_1$ \v{C}ech complete space and $f:X\to X$ is a topological contraction, then $f$ has a unique fixed point.
\end{thm}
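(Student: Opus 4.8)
The plan is to mimic the classical Banach argument but replace "Cauchy sequence" with a centered family of closed sets, and "completeness" with the sequence of open covers witnessing \v{C}ech completeness. Fix a sequence $(\mathcal{U}_i)_i$ of open covers of $X$ as in the second definition of \v{C}ech completeness. For each $i$, since $f$ is a topological contraction, there is $n_i\in\mathbb{N}$ and a member $U_i\in\mathcal{U}_i$ with $f^{n_i}[X]\subseteq U_i$; we may assume the exponents $n_i$ are strictly increasing in $i$. Set $F_m:=\overline{f^m[X]}$. These are closed, nonempty, and decreasing in $m$ (because $f^{m+1}[X]\subseteq f^m[X]$, using that $f[X]\subseteq X$ and monotonicity of images), hence the family $(F_m)_m$ is centered. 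Moreover $F_{n_i}=\overline{f^{n_i}[X]}\subseteq \overline{U_i}$; to feed the \v{C}ech condition I actually want $F_{m_i}$ inside a \emph{member} of $\mathcal{U}_i$, so I would instead pass to a finer witnessing sequence or, more simply, note that $f^{n_i}[X]\subseteq U_i$ already gives $f^{n_i}[X]$ inside a member of $\mathcal{U}_i$ and apply the completeness condition to the sets $\overline{f^m[X]}$ after checking that having $f^{n_i}[X]$ (not its closure) inside an open $U_i$ is exactly what is needed — the condition only asks that some $F_{m_i}$ be a subset of some member of $\mathcal{U}_i$, and one can arrange this by taking $F_m := \overline{f^m[X]}$ and using that $f^{n_i}[X]$ is dense in $F_{n_i}$ together with a slightly shrunk cover, or by observing $f^{n_i+1}[X]\subseteq f[U_i]$ and $f$ closed. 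I will take $K:=\bigcap_m F_m$, which is nonempty by \v{C}ech completeness.

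Next I claim $K$ is a singleton, and this is where the topological-contraction hypothesis (specifically condition $(\star)$-type reasoning) does its real work, though $(\star)$ as stated was for compact $X$, so I cannot quote it directly. Instead I argue from the definition: suppose $x\neq y$ both lie in $K$. Since $X$ is $T_1$, $\{X\setminus\{x\},X\setminus\{y\}\}$ — together with, say, any open set avoiding both, or just these two if they cover — need not cover $X$, so I would rather use the cover $\mathcal{V}=\{X\setminus\{x\},X\setminus\{y\}\}$ which \emph{does} cover $X$ once we note $x\in X\setminus\{y\}$ and $y\in X\setminus\{x\}$. Applying the contraction property to $\mathcal{V}$ yields $n$ with $f^n[X]\subseteq X\setminus\{x\}$ or $f^n[X]\subseteq X\setminus\{y\}$; say the former. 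Then $x\notin \overline{f^n[X]}$? Not immediately — $x$ could be a limit point. Here I use that $f$ is \emph{closed}: $f^n$ is closed, so $f^n[X]$ is closed, hence $f^n[X]=\overline{f^n[X]}=F_n\supseteq K\ni x$, contradicting $f^n[X]\subseteq X\setminus\{x\}$. Thus $K=\{x^*\}$ for a single point $x^*$.

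Finally I must show $x^*$ is a fixed point. Since $f$ is closed, $f[F_m]=f[f^m[X]]=f^{m+1}[X]=F_{m+1}$, so $f[K]=f\big[\bigcap_m F_m\big]\subseteq\bigcap_m f[F_m]=\bigcap_m F_{m+1}=K$; as $K=\{x^*\}$ this forces $f(x^*)=x^*$. Uniqueness is immediate: any fixed point $z$ satisfies $z=f^m(z)\in f^m[X]=F_m$ for all $m$, hence $z\in K=\{x^*\}$.

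The main obstacle I anticipate is the bookkeeping in the first paragraph: making the transition from "$f^{n_i}[X]$ is contained in an open member $U_i$ of $\mathcal{U}_i$" to the precise hypothesis the \v{C}ech condition wants about the closed sets $F_m$. Because each $F_m=f^m[X]$ is \emph{already closed} (as $f$ is a closed map, $f^m$ is closed, so $f^m[X]$ is closed), this simplifies dramatically — $F_{n_i}=f^{n_i}[X]\subseteq U_i$ directly, so no shrinking or density argument is needed at all, and the three steps above go through cleanly. That observation, that closedness of $f$ makes $f^m[X]$ closed and equal to its own closure, is the linchpin that both supplies the centered closed family for \v{C}ech completeness and powers the $T_1$ separation argument for uniqueness of the limit point.
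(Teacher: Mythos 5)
Your proof is correct and follows essentially the same route as the paper: the sets $f^m[X]$ are closed (since $f$ is a closed map), decreasing, and centered, the contraction property places $f^{n_i}[X]$ inside a member of $\mathcal{U}_i$, so \v{C}ech completeness gives a nonempty intersection $K$, which the $T_1$ cover $\{X\setminus\{x\},X\setminus\{y\}\}$ forces to be a singleton, and $f[K]\subseteq K$ yields the fixed point. The hedging in your first paragraph about closures is unnecessary, as you yourself observe at the end — closedness of $f$ makes $\overline{f^m[X]}=f^m[X]$, which is exactly the paper's starting point.
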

  
\begin{proof}  Let $(\mathcal{U}_i)_i$ be a sequence of open covers of $X$ from the definition of \v{C}ech completeness. The sequence $(f^n[X])_n$ is a non-increasing sequence of non-empty closed sets ($f$ is closed mapping); thus it is centered. Because $f$ is a topological contraction for every $i\in\mathbb{N}$ there exists $n_i$ such that $f^{n_i}[X]$ is contained in some element of $\mathcal{U}_i$. Hence the intersection
$\bigcap_nf^n[X]$ is nonempty. We claim that there is only one $x\in\bigcap_nf^n[X]$. Assume that $y\in\bigcap_nf^n[X]$ and $y\neq x$. The two element family $\{X\setminus\{x\},X\setminus\{y\}\}$ is an open cover of $X$ and for no $n$ $f^n[X]$ is contained in any of these two members of the cover; this contradicts the assumption that $f$ is a topological contraction. 

Now let $x\in\bigcap_nf^n[X]$. Because then $f(x)\in\bigcap_nf^n[X]$ it follows that $f(x)=x$ and $x$ is a fixed point of $f$. It is a unique fixed point because if $f(y)=y$ for any $y\in X$ we have $y\in\bigcap_nf^n[X]$ and then, as we noticed above, $y=x$. This concludes the proof.  
\end{proof}

\section{Analogues of the Banach fixed point theorem in Hausdorff spaces}

We shall prove two fixed-point theorems for continuous weak topological contractions and closed weak topological contractions on Hausdorff topological spaces.

\begin{thm}\label{T2-continuous-fixed}
If $X$ is a Hausdorff topological space and $f$ is a continuous weak topological contraction on $X$, then $f$ has a unique fixed point.
\end{thm}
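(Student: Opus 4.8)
The plan is to dispatch uniqueness with a one-line cover argument and to obtain existence by assuming there is no fixed point and then manufacturing an open cover that the weak-contraction hypothesis cannot survive. For uniqueness, suppose $u$ and $v$ are distinct fixed points. Since a Hausdorff space is $T_1$, the two sets $X\setminus\{u\}$ and $X\setminus\{v\}$ form an open cover of $X$; feeding this cover and the pair $u,v$ into the definition of a weak topological contraction yields $n\in\mathbb N_0$ and a member of the cover containing $f^n[\{u,v\}]=\{u,v\}$ (the points being fixed), which is impossible because neither set contains both $u$ and $v$.

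For existence I would fix $x_0\in X$, set $x_n=f^n(x_0)$, and assume for contradiction that $f$ has no fixed point; then $x_n\neq x_{n+1}$ for every $n$, since a coincidence $x_n=x_{n+1}$ would exhibit a fixed point. The heart of the argument is to build an open cover $\mathcal U$ of $X$ no member of which contains two consecutive orbit points $\{x_n,x_{n+1}\}$. Once such a $\mathcal U$ is in hand, applying the weak-contraction property to $\mathcal U$ and to the single pair $x_0,x_1$ produces some $n\in\mathbb N_0$ with $\{x_n,x_{n+1}\}=f^n[\{x_0,x_1\}]$ contained in one member of $\mathcal U$ — the contradiction that finishes the proof.

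The cover would be assembled pointwise: to each $z\in X$ I assign an open neighbourhood $W_z$ in one of two ways. If $z$ is not a cluster point of the sequence $(x_n)$, then some neighbourhood of $z$ meets the orbit $\{x_m:m\in\mathbb N_0\}$ in only finitely many points, and deleting the superfluous ones (legitimate since $X$ is $T_1$) gives $W_z\ni z$ with $W_z\cap\{x_m:m\in\mathbb N_0\}\subseteq\{z\}$; such a $W_z$ cannot contain the two distinct points $x_n,x_{n+1}$. If $z$ is a cluster point of $(x_n)$, then $f(z)\neq z$ by our assumption, so the Hausdorff axiom separates $z$ from $f(z)$ by disjoint open sets, and continuity of $f$ then furnishes an open $W_z\ni z$ with $f[W_z]\cap W_z=\emptyset$; whenever $x_n\in W_z$ we get $x_{n+1}=f(x_n)\in f[W_z]$, hence $x_{n+1}\notin W_z$, so again $\{x_n,x_{n+1}\}\not\subseteq W_z$. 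Then $\mathcal U=\{W_z:z\in X\}$ is the desired cover. Note that the degenerate cases — the orbit being eventually periodic, or having no cluster point at all — are automatically absorbed into this scheme.

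I expect the main obstacle to be conceptual rather than computational: spotting that the hypothesis should be tested only on the pair $(x_0,x_1)$, against a cover engineered precisely to pull consecutive orbit points apart, and then verifying that the ``cluster point versus non-cluster point'' dichotomy, combined with continuity of $f$ at the cluster points and the Hausdorff property, really does yield neighbourhoods small enough that every $\{x_n,x_{n+1}\}$ escapes each member of $\mathcal U$. After that the only remaining work is the routine bookkeeping for the two constructions of $W_z$ and the verification that $\mathcal U$ covers $X$.
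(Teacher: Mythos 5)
Your proof is correct and is essentially the paper's argument run in contrapositive form: the paper first extracts a point $y$ every neighbourhood of which traps a consecutive pair $\{f^n(x_0),f^{n+1}(x_0)\}$ and then shows $y$ is fixed via the neighbourhood $U\cap f^{-1}[V]$, whereas you assume there is no fixed point and assemble the ``bad'' cover directly; both arguments hinge on the same two ingredients, namely testing the weak-contraction hypothesis on the pair $(x_0,f(x_0))$ against a cover whose members avoid consecutive orbit points, and using Hausdorff separation plus continuity to manufacture such members. One small remark: your cluster-point dichotomy is superfluous, since under the no-fixed-point assumption the set $W_z=U\cap f^{-1}[V]$, with $U,V$ disjoint neighbourhoods of $z$ and $f(z)$, satisfies $f[W_z]\cap W_z=\emptyset$ for \emph{every} $z\in X$, so the second construction alone already yields the desired cover.
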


\begin{proof} Let $x_0\in X$ be fixed. We claim that there exists a point $y\in X$ such that for each neighbourhood $U$ of $y$ there exists 
$n\in \mathbb{N}_0$ such that $\{f^n(x_0),f^{n+1}(x_0)\}\subseteq U$. If it were not the case then  every point $x\in X$ would have a neghbourhood $V_x$ not containing any set $\{f^n(x_0),f^{n+1}(x_0)\}=\{f^n(x_0),f^{n}(f(x_0))\}$ and, for the open cover $\{V_x:x\in X\}$, this would contradict $f$ being a weak topological contraction 

We claim that $y$ is a fixed point of $f$. Assume, striving for a contradiction, that $y\neq f(y)$. Let $U,V$ be disjoint 
neighbourhoods of $y$ and $f(y)$, respectively. Let $W=f^{-1}[V]\cap U$. The set $W$ is a neighbourhood of $y$, hence it contains  
the set  $\{f^n(x_0),f^{n+1}(x_0)\}$ for some $n$. But then 
 $\{f^{n+1}(x_0),f^{n+2}(x_0)\}\subseteq V$ and $f^{n+1}(x_0)\in U\cap V$ which is a contradiction with our assumption that
 $U,V$ are disjoint.

The point $y$ is the unique fix point of $f$. Indeed, assume that $z\neq y$ and $z$ is also a fixed point of $f$. Then 
$\{X\setminus\{y\},X\setminus\{z\}\}$ is an open cover of $X$ but for no $n$ the set $\{f^n(y),f^n(z)\}=\{y,z\}$ is contained in a member of this cover.  

\end{proof}

One can notice by inspection of the proof of Theorem \ref{T2-continuous-fixed} that much weaker assumption is sufficient to obtain the existence of a fixed point, but maybe without its uniqueness. Namely the following theorem holds with the proof going along the same lines as that of Theorem \ref{T2-continuous-fixed}  (except the uniqueness part).

\begin{thm}\label{T2-continuous-fixed-weak}
If $X$ is a Hausdorff topological space, $f:X\to X$ is a continuous mapping,  there exists $x_0\in X$ such that for each open cover $\mathcal{U}$ of $X$ there is $n\in\mathbb{N}_0$ such that $\{f^n(x_0),f^{n+1}(x_0)\}\subseteq U$ for some $U\in\mathcal{U}$, then $f$ has a fixed point.   
\end{thm}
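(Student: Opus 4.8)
The plan is to follow the argument of Theorem \ref{T2-continuous-fixed} essentially verbatim, simply discarding the final paragraph where uniqueness was extracted. Concretely, the hypothesis hands us the point $x_0$ directly, so the first step of the previous proof — producing, from the weak topological contraction property, a point $x_0$ whose orbit pairs $\{f^n(x_0),f^{n+1}(x_0)\}$ get trapped in every open cover — is now exactly the standing assumption and requires no work. What remains is to manufacture the candidate fixed point and verify it is fixed.

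First I would produce a point $y\in X$ such that every neighbourhood $U$ of $y$ contains $\{f^n(x_0),f^{n+1}(x_0)\}$ for some $n\in\mathbb{N}_0$. This is the same contrapositive argument as before: if no such $y$ existed, then each $x\in X$ would have an open neighbourhood $V_x$ containing no set of the form $\{f^n(x_0),f^{n+1}(x_0)\}$, and then the open cover $\{V_x:x\in X\}$ would violate the hypothesis that for every open cover some such pair lies in a single member. Hence $y$ exists.

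Next I would check that $y$ is a fixed point, using continuity of $f$ and the Hausdorff property exactly as in Theorem \ref{T2-continuous-fixed}. Striving for a contradiction, suppose $y\neq f(y)$, pick disjoint open neighbourhoods $U\ni y$ and $V\ni f(y)$, and set $W=f^{-1}[V]\cap U$, which is an open neighbourhood of $y$ by continuity. By the property of $y$, there is $n$ with $\{f^n(x_0),f^{n+1}(x_0)\}\subseteq W$; applying $f$ to the point $f^n(x_0)\in f^{-1}[V]$ gives $f^{n+1}(x_0)\in V$, while $f^{n+1}(x_0)\in W\subseteq U$ as well, so $f^{n+1}(x_0)\in U\cap V$, contradicting disjointness. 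Therefore $f(y)=y$ and $f$ has a fixed point.

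I do not anticipate a genuine obstacle here, since the argument is a strict sub-argument of one already carried out; the only thing to be careful about is that the weakened hypothesis is used in precisely the one place it is needed (constructing $y$) and that nothing in the fixed-point verification secretly relied on $x_0$ having been chosen via the full weak topological contraction property — inspection confirms it did not. One should also note explicitly that uniqueness genuinely can fail under this hypothesis, so the statement correctly claims only existence; the identity map on a two-point discrete space, with $x_0$ either point, satisfies the hypothesis but has two fixed points, which is why the final paragraph of the previous proof has no analogue.
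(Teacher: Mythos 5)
Your proposal is correct and is precisely what the paper intends: the paper itself only remarks that the proof "goes along the same lines as that of Theorem \ref{T2-continuous-fixed} (except the uniqueness part)," and your argument carries that out faithfully, using the hypothesis on $x_0$ exactly where the weak-contraction property was used to locate $y$, and then the same continuity-plus-Hausdorff contradiction to show $f(y)=y$. The closing observation that uniqueness genuinely fails (e.g.\ the identity on a two-point discrete space) is a correct and worthwhile sanity check.
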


From the above theorem one can derive the following corollary, which is a theorem due to G. Beer (Theorem 4 in \cite{B}).

\begin{cor}\label{Beer}\textnormal{(Beer)}
Let $(X,d)$ be an Atsuji space and $f:X\to X$ be a continuous mapping. If there exists an $x_0\in X$ such that $\liminf_{n\to\infty}d(f^n(x_0),f^{n+1}(x_0))=0$, then $f$ has a fixed point.   
\end{cor}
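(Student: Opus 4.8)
The plan is to realize this as an instance of Theorem \ref{T2-continuous-fixed-weak}. An Atsuji (equivalently, UC) metric space is in particular Hausdorff and $f$ is assumed continuous, so the only thing to verify is the hypothesis on the orbit of $x_0$: for every open cover $\mathcal{U}$ of $X$ there is an $n\in\mathbb{N}_0$ with $\{f^n(x_0),f^{n+1}(x_0)\}\subseteq U$ for some $U\in\mathcal{U}$. Once this is established, Theorem \ref{T2-continuous-fixed-weak} immediately yields a fixed point.

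First I would fix an arbitrary open cover $\mathcal{U}$ of $X$. By the defining property of an Atsuji space, there exists $\varepsilon>0$ such that for every $x\in X$ the ball $B_d(x,\varepsilon)$ is contained in some member of $\mathcal{U}$. Next I would invoke the hypothesis $\liminf_{n\to\infty}d(f^n(x_0),f^{n+1}(x_0))=0$: this gives an $n\in\mathbb{N}$ (in fact infinitely many) with $d(f^n(x_0),f^{n+1}(x_0))<\varepsilon$, so both $f^n(x_0)$ and $f^{n+1}(x_0)$ lie in $B_d(f^n(x_0),\varepsilon)$, which in turn lies in a single $U\in\mathcal{U}$. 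Hence $\{f^n(x_0),f^{n+1}(x_0)\}\subseteq U$, which is exactly the required condition.

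There is essentially no obstacle here; the corollary is a direct translation. The one point worth stating carefully is that the liminf condition only guarantees that the distances get small along a subsequence, but since we only need a \emph{single} index $n$ for each cover, a subsequential conclusion is enough — this is precisely why Theorem \ref{T2-continuous-fixed-weak} is phrased with ``there is $n\in\mathbb{N}_0$'' rather than ``for all sufficiently large $n$''. Uniqueness of the fixed point is not claimed (and indeed need not hold under these hypotheses), consistent with the weakened form of the theorem being used.
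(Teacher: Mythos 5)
Your proposal is correct and coincides with the paper's own argument: both use the Atsuji property to extract a Lebesgue-type $\varepsilon$ for the cover, the liminf hypothesis to find a single $n$ with $d(f^n(x_0),f^{n+1}(x_0))<\varepsilon$, and then apply Theorem \ref{T2-continuous-fixed-weak}. No differences worth noting.
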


\begin{proof}
Let $\mathcal{U}$ be an open cover of $X$. Because $X$ is an Atsuji space there exists $\varepsilon>0$ such that each open ball $B_d(y,\varepsilon)$ is contained in some $U\in\mathcal{U}$. From $\liminf_{n\to\infty}d(f^n(x_0),f^{n+1}(x_0))=0$ we infer that there exists $n\in \mathbb{N}_0$ such that  $d(f^n(x_0),f^{n+1}(x_0))<\varepsilon$. Hence $\{f^n(x_0),f^{n+1}(x_0)\}\subseteq B_d(f^n(x_0),\varepsilon)\subseteq U$ for some $U\in \mathcal{U}$ and the hypothesis of Theorem \ref{T2-continuous-fixed-weak} is satisfied and, by this theorem, $f$ has a fixed point.
\end{proof}  

The following theorem is a counterpart of Theorem \ref{T2-continuous-fixed} for closed mappings (instead of continuous). 

\begin{thm}\label{T2-closed-fixed}
If $X$ is a Hausdorff first-countable topological space and $f$ is a closed weak topological contraction on $X$, then $f$ has a unique fixed point.
\end{thm}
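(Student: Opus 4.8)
The plan is to mimic the structure of the proof of Theorem \ref{T2-continuous-fixed}, producing a candidate fixed point $y$ as an accumulation-type point of an orbit and then using the first-countability and closedness of $f$ in place of continuity. First I would fix $x_0\in X$ and, exactly as in the proof of Theorem \ref{T2-continuous-fixed}, use the weak-contraction property applied to a putative cover $\{V_x:x\in X\}$ to produce a point $y\in X$ with the property that every open neighbourhood $U$ of $y$ contains $\{f^n(x_0),f^{n+1}(x_0)\}$ for some $n\in\mathbb{N}_0$. Invoking first-countability, fix a decreasing neighbourhood base $(U_k)_k$ at $y$; for each $k$ choose $n_k$ with $\{f^{n_k}(x_0),f^{n_k+1}(x_0)\}\subseteq U_k$, and by passing to a subsequence arrange $n_k\to\infty$ (this is possible because if the $n_k$ were bounded, some single pair $\{f^{n}(x_0),f^{n+1}(x_0)\}$ would lie in $\bigcap_kU_k=\{y\}$ in a Hausdorff space, forcing $f^n(x_0)=f^{n+1}(x_0)=y$, whence $y$ is already a fixed point and we are done). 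Set $a_k=f^{n_k}(x_0)$ and $b_k=f^{n_k+1}(x_0)=f(a_k)$; then $a_k\to y$ and $b_k\to y$.

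Next I would argue that $y=f(y)$. Suppose not, and pick disjoint open neighbourhoods $U\ni y$ and $V\ni f(y)$. Consider the set $A=\{a_k:k\in\mathbb{N}\}$. Since $f(a_k)=b_k\to y\notin V$, only finitely many $b_k$ lie in $V$, so after discarding finitely many indices we may assume $f[A]\cap V=\emptyset$, i.e. $f(a_k)\ne f(y)$'s neighbourhood region — more precisely $f(a_k)\notin V$ for all $k$. Now the key move: because $f$ is a closed map, $f[\overline{A}]$ is closed (indeed $f[\overline A]\supseteq\overline{f[A]}$, and since $f[\overline A]$ is closed it equals a closed set containing $f[A]$). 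Since $y\in\overline A$ (as $a_k\to y$), we get $f(y)\in f[\overline A]$. But I claim $f[\overline A]\subseteq \overline{\{b_k:k\}}\cup f[A']$ where $A'$ is the set of limit points of $A$ in $\overline A$; the cleanest route is: $\overline A=A\cup\{y\}$ when $A$ has no other accumulation points, and if $A$ does have other accumulation points one handles them via a diagonal subsequence. So $f[\overline A]=f[A]\cup\{f(y)\}$ and also $f[\overline A]$ closed forces $\overline{f[A]}\subseteq f[\overline A]$, giving $y=\lim b_k\in\overline{f[A]}\subseteq f[\overline A]=f[A]\cup\{f(y)\}$. Since $y\notin f[A]$ (we arranged $f(a_k)=b_k\in U$ eventually and $b_k\to y$, but $b_k$ could equal $y$ only finitely often unless again we land on a fixed point — handle that trivial subcase separately), we conclude $y=f(y)$, the desired contradiction.

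The uniqueness part is identical to that in Theorem \ref{T2-continuous-fixed}: if $z\ne y$ were another fixed point, then $\{X\setminus\{y\},X\setminus\{z\}\}$ is an open cover, yet $f^n[\{y,z\}]=\{y,z\}$ is never contained in either member, contradicting the weak-contraction property.

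The main obstacle is the replacement of continuity by closedness in the step $y=f(y)$: with continuity one simply says $b_k=f(a_k)\to f(y)$, but a closed map need not be sequentially continuous, so the argument must instead extract information from the image of the \emph{closure} $\overline A$ of the orbit tail $A$ being closed, and carefully dispose of the degenerate cases where the sequence $(a_k)$ is eventually constant or has extra accumulation points (each of which, pleasantly, leads directly to a fixed point). First-countability is exactly what lets us work with the sequence $(a_k)$ rather than a net, so that "$f[\overline A]$ closed" has enough grip to locate $f(y)$.
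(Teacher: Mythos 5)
Your proposal is correct and follows essentially the same route as the paper: the set you call $\overline{A}$ is exactly the paper's closed set $D=\{f^{n_i}(x_0):i\}\cup\{y\}$, and your contradiction ($y\in\overline{f[A]}\subseteq f[\overline{A}]=f[A]\cup\{f(y)\}$ yet $y\notin f[A]\cup\{f(y)\}$) is the paper's observation that $f[D]$ fails to be closed, with the same degenerate cases ($a_k=y$ or $b_k=y$ infinitely often forcing a cycle and hence a fixed point) disposed of in the same way. The only cosmetic difference is your hedge about ``extra accumulation points'' of $A$, which cannot occur since in a Hausdorff space the closure of the range of a convergent sequence is the range together with its limit.
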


\begin{proof} Let $x_0\in X$ be fixed. As in the proof of Theorem \ref{T2-continuous-fixed} we show that
there exists a point $y\in X$ such that for each neighbourhood $U$ of $y$ there exists 
$n\in \mathbb{N}_0$ such that $\{f^n(x_0),f^{n+1}(x_0)\}\subseteq U$. We claim that $f(y)=y$. Striving for a contradiction, let us assume that $f(y)\neq y$. 
Because $X$ is first-countable we can assume that there exists a sequence $(f^{n_i}(x_0))_i$ tending to $y$ such that the sequence $(f^{n_i+1}(x_0))_i$ also tends to $y$. We can also assume that $f^{n_i}(x_0)\neq y$ for all $i\in\mathbb{N}$ (if $f^{n_i}(x_0)$ is equal to $y$  for infinitely many $i$s , then $f^{n_i+1}(x_0)=f(y)$ and, because $f^{n_i+1}(x_0)\to y$, we get $f(y)=y$). If $f^{n_i+1}(x_0)=y$ for infinitely many $i$s then $f^{n_i+1}(x_0)=y$ and $f^{n_j+1}(y)=y$ for some $j>i$ and the points 
$f^{n_i+1}(x_0), f^{n_i+2}(x_0),\ldots$
would form a cycle of length at most $n_j-n_i$. Hence, by the convergence $f^{n_k}(x_0)\to y$, we get  $f^{n_k}(x_0)= y$ for all $k$'s greater than some $k_0$, and then, as we argued above, $f(y)=y$. Thus we can also assume that all the points $f^{n_i+1}(x_0)$ are different than $y$. The set $D:=\{f^{n_i}(x_0):i\in\mathbb{N}\}\cup\{y\}$ is closed and its image $f[D]=\{f^{n_i+1}(x_0):i\in\mathbb{N}\}\cup\{f(y)\}$ is not closed because it does not contain $y$ which is the limit of the sequence $(f^{n_i+1}(x_0))_i$. This contradicts the assumption that $f$ is a closed mapping.

Assume now that $z\neq y$ is also a fixed point of $f$. Then for the open cover $\{X\setminus\{y\},X\setminus\{z\}\}$ for no $n$ the set $f^n[\{y,z\}]=\{y,z\}$ is contained in an element of this cover. This contradicts the assumption that $f$ is a weak contraction.

\end{proof}

As for the case of continuous mappings one can notice by inspection of the proof of Theorem \ref{T2-closed-fixed} that again a much weaker assumption is sufficient to obtain the existence of a fixed point, maybe without its uniqueness. Namely, the following theorem holds with the proof going along the same lines as that of Theorem \ref{T2-closed-fixed} (except the uniqueness part).

\begin{thm}\label{T2-closed-fixed-weak}
If $X$ is a first countable Hausdorff topological space, $f:X\to X$ is a closed mapping,  there exists $x_0\in X$ such that for each open cover $\mathcal{U}$ of $X$ there is $n\in\mathbb{N}_0$ such that $\{f^n(x_0),f^{n+1}(x_0)\}\subseteq U$ for some $U\in\mathcal{U}$, then $f$ has a fixed point.   
\end{thm}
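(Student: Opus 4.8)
The plan is to mirror the proof of Theorem \ref{T2-closed-fixed} verbatim up to the point where uniqueness is invoked, and simply drop that last paragraph. First I would fix the point $x_0\in X$ given by hypothesis. The hypothesis already hands us exactly what in Theorem \ref{T2-closed-fixed} had to be extracted from the weak-contraction property together with a compactness-style covering argument: namely, that for every open cover $\mathcal{U}$ of $X$ there is $n\in\mathbb{N}_0$ with $\{f^n(x_0),f^{n+1}(x_0)\}\subseteq U$ for some $U\in\mathcal{U}$. From this I would re-derive, as in the earlier proof, that there exists a point $y\in X$ such that every neighbourhood $U$ of $y$ contains $\{f^n(x_0),f^{n+1}(x_0)\}$ for some $n$: otherwise each $x\in X$ would have a neighbourhood $V_x$ containing no such pair, and $\{V_x:x\in X\}$ would be an open cover contradicting the hypothesis.

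Next I would show $f(y)=y$, copying the argument of Theorem \ref{T2-closed-fixed}. Striving for a contradiction, assume $f(y)\neq y$. Using first countability of $X$ I would pick a sequence $(f^{n_i}(x_0))_i$ converging to $y$ with $(f^{n_i+1}(x_0))_i$ also converging to $y$. As before one reduces to the case that $f^{n_i}(x_0)\neq y$ for all $i$ (otherwise $f^{n_i+1}(x_0)=f(y)$ together with $f^{n_i+1}(x_0)\to y$ forces $f(y)=y$) and likewise $f^{n_i+1}(x_0)\neq y$ for all $i$ (the cycle argument: if $f^{n_i+1}(x_0)=y$ infinitely often, the orbit eventually cycles, and convergence to $y$ forces the tail of the orbit to equal $y$, whence $f(y)=y$). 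Then $D:=\{f^{n_i}(x_0):i\in\mathbb{N}\}\cup\{y\}$ is closed, but $f[D]=\{f^{n_i+1}(x_0):i\in\mathbb{N}\}\cup\{f(y)\}$ fails to be closed since it omits the limit point $y$ of the sequence $(f^{n_i+1}(x_0))_i$; this contradicts $f$ being a closed mapping.

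There is no real obstacle here: the only thing being removed relative to Theorem \ref{T2-closed-fixed} is the final paragraph establishing uniqueness, which used the full weak-contraction property on the pair $\{y,z\}$ — precisely the strength we are no longer assuming. Indeed, one expects genuine non-uniqueness in general (for instance, a constant map or a map with several fixed points can satisfy the orbit hypothesis for a suitable $x_0$ while the weak-contraction property fails). So the statement as given — existence only — is exactly what the argument yields, and the proof is a one-line appeal: the proof proceeds exactly as in Theorem \ref{T2-closed-fixed}, using the hypothesis on $x_0$ in place of the weak-contraction property in the opening step, and omitting the concluding uniqueness argument.
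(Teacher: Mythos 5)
Your proposal is correct and matches the paper exactly: the paper itself gives no separate proof, stating only that the argument goes along the same lines as that of Theorem \ref{T2-closed-fixed} with the uniqueness part omitted, which is precisely what you do. Your rederivation of the point $y$ from the orbit hypothesis and the subsequent closed-map/first-countability argument are faithful to the original.
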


From this theorem we can derive a version of Beer's theorem (Corollary \ref{Beer}) for closed mapping on Atsuji spaces; we note that, as they are metric spaces they are first countable.

\begin{cor}\label{Beer-closed}
Let $(X,d)$ be an Atsuji space and $f:X\to X$ be a closed mapping. If there exists an $x_0\in X$ such that $\liminf_{n\to\infty}d(f^n(x_0),f^{n+1}(x_0))=0$, then $f$ has a fixed point.   
\end{cor}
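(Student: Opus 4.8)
The plan is to deduce Corollary \ref{Beer-closed} from Theorem \ref{T2-closed-fixed-weak} in exactly the same way Corollary \ref{Beer} was deduced from Theorem \ref{T2-continuous-fixed-weak}. The only structural difference is that the word ``continuous'' is replaced by ``closed'' in both the hypothesis and the applied theorem, and one needs the observation (already noted in the statement) that a metric space is first countable, so that Theorem \ref{T2-closed-fixed-weak} is applicable. Thus essentially no new idea is required; the argument is a verbatim transcription of the earlier proof.

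Concretely, I would proceed as follows. Fix an arbitrary open cover $\mathcal{U}$ of $X$. Since $(X,d)$ is an Atsuji space, there is an $\varepsilon>0$ such that every open ball $B_d(y,\varepsilon)$ is contained in some member of $\mathcal{U}$ (this is precisely the defining property of Atsuji spaces recalled in Section \ref{terminology}). From the hypothesis $\liminf_{n\to\infty}d(f^n(x_0),f^{n+1}(x_0))=0$ we pick $n\in\mathbb{N}_0$ with $d(f^n(x_0),f^{n+1}(x_0))<\varepsilon$, whence $\{f^n(x_0),f^{n+1}(x_0)\}\subseteq B_d(f^n(x_0),\varepsilon)\subseteq U$ for some $U\in\mathcal{U}$. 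Since $\mathcal{U}$ was arbitrary, $x_0$ witnesses the hypothesis of Theorem \ref{T2-closed-fixed-weak}: $X$ is a first countable Hausdorff space (being metric), $f$ is closed, and for every open cover there is an $n$ with $\{f^n(x_0),f^{n+1}(x_0)\}$ inside a single cover element. Applying that theorem yields a fixed point of $f$.

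There is no real obstacle here; the only point that needs a word of justification is that a metric space is first countable (the balls $B_d(x,1/k)$, $k\in\mathbb{N}$, form a countable neighbourhood base at $x$) and Hausdorff, so that Theorem \ref{T2-closed-fixed-weak} genuinely applies — and this is exactly the parenthetical remark preceding the corollary. I would therefore write the proof in three sentences, mirroring the proof of Corollary \ref{Beer} line for line, simply citing Theorem \ref{T2-closed-fixed-weak} in place of Theorem \ref{T2-continuous-fixed-weak}.

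\begin{proof}
Let $\mathcal{U}$ be an open cover of $X$. Because $X$ is an Atsuji space there exists $\varepsilon>0$ such that each open ball $B_d(y,\varepsilon)$ is contained in some $U\in\mathcal{U}$. From $\liminf_{n\to\infty}d(f^n(x_0),f^{n+1}(x_0))=0$ we infer that there exists $n\in\mathbb{N}_0$ such that $d(f^n(x_0),f^{n+1}(x_0))<\varepsilon$. Hence $\{f^n(x_0),f^{n+1}(x_0)\}\subseteq B_d(f^n(x_0),\varepsilon)\subseteq U$ for some $U\in\mathcal{U}$. Since a metric space is first countable and Hausdorff, the hypothesis of Theorem \ref{T2-closed-fixed-weak} is satisfied and, by this theorem, $f$ has a fixed point.
\end{proof}
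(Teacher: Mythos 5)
Your proof is correct and is exactly the argument the paper intends: the paper omits an explicit proof of this corollary precisely because it is the verbatim analogue of the proof of Corollary \ref{Beer}, with Theorem \ref{T2-closed-fixed-weak} replacing Theorem \ref{T2-continuous-fixed-weak} and the observation that metric spaces are first countable and Hausdorff making that theorem applicable.
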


One can ask if  Theorems \ref{T2-continuous-fixed} and \ref{T2-closed-fixed} hold as well for $T_1$ compact spaces. The following example shows that this is not the case. The same example shows that in the hypothesis of Theorem \ref{Banach-T1-complete} "topological contraction" cannot be replaced by "weak topological contraction".

\vspace{0.4 cm}

\noindent{\bf Example 1.} Let $\mathbb{N}$ be endowed with the topology consisting of all co-finite subsets of $\mathbb{N}$. The space is compact and therefore it is also \v{C}ech complete. 
Let $f:\mathbb{N}\to\mathbb{N}$ be a permutation of $\mathbb{N}$ without finite cycles. As a permutation of  $\mathbb{N}$ $f$ is both continuous and closed. It is also a weak topological contraction. Indeed, if $k,m\in\mathbb{N}$ then, because $f$ has no finite cycles,  $\lim_{n\to\infty}f^n(k)=\lim_{n\to\infty}f^n(m)=\infty.$. Thus for each open set $U$, which is a co-finite subset of $\mathbb{N}$, for $n$ large enough we have
$f^n[\{m,n\}]\in U$. The mapping $f$ does not have cycles of finite length, so in particular of length one, in other words it does not have a fixed point.

\vspace{0.4 cm}

The next example shows that the assumption from Theorem \ref{T2-closed-fixed} that the space considered is first countable cannot be dropped. 

\vspace{0.4 cm}

\noindent{\bf Example 2.} Let $A\subseteq\mathbb{N}$. The upper density $\bar{\delta}(A)$ of $A$ is defined as  
$$\bar{\delta}(A):=\limsup_n\frac{|\{1,\ldots,n\}\cap A|}{n}=\alpha,$$
and the the density $\delta(A)$  of $A$ as
$${\delta}(A):=\lim_n\frac{|\{1,\ldots,n\}\cap A|}{n}=\alpha$$
if this limit exists.

Let $r\in\mathbb{Z}$. Let $A\subseteq\mathbb{N}$. Let
$$A+r:=\{a+r:a\in A\}\cap\mathbb{N}.$$
Let $\mathcal{G}$ be a family of subsets of $\mathbb{N}_0$. Let 
$$\mathcal{G}+r:=\{G+r:G\in \mathcal{G}\},$$
and
$$S(\mathcal{G}):=\{\mathcal{G}-n:n\in\mathbb{N}_0\}.$$

Let $\mathcal{A}$ be the family of all subsets of $\mathbb{N}$ that have density equal to one.
We will construct a family of ultrafilters $\{\mathcal{F}_\alpha:\alpha<\mathfrak{c}\}$ such that the classes
$S(\mathcal{F}_\alpha)$ are pairwise disjoint, each ultrafilter contains a subset of $\mathbb{N}$ of density zero,
and each infinite subset of $\mathbb{N}$ is contained in one of the ultrafilters  $\mathcal{F}_\alpha.$

We construct the family $\{\mathcal{F}_\alpha\:\alpha<\mathfrak{c}\}$ by a transfinite recursion of length $\mathfrak{c}$ .
Let $\{E_\alpha:\alpha<\mathfrak{c}\}$ be the family of all infinite subsets of $\mathbb{N}$.
Let us assume that we have already defined ultrafilters $\mathcal{F}_\alpha$ for $\alpha<\gamma$. Let 
$$\xi=\min\{\beta:E_\beta\notin\bigcup_{\alpha<\gamma}\mathcal{F}_\alpha\}.$$
The set from which we take minimum is not empty because $\mathbb{N}$ is a union of $\mathfrak{c}$ many almost disjoint infinite sets and 
no two of them can belong to the same ultrafilter but so far we have defined less than $\mathfrak{c}$ ultrafilters so one of these almost disjoint infinite sets does not belong to any of them. The set $E_\xi$ contains an infinite set $F$ of density zero. 
As there are $2^{\aleph_0}$ many ultrafilters containing a given infinite set
we can find an ultrafilter  $\mathcal{F}_\gamma$ that does not belong to $\bigcup_{\alpha<\gamma}S(\mathcal{F}_\alpha)$ containing $F$.
Then it also contains $E_\xi$.

Now let 
$$X=\mathbb{N}\cup\{\infty\}\cup\bigcup_{\alpha<\mathfrak{c}}S(\mathcal{F}_\alpha).$$
We define a topology on $X$ in the following way. The neighbourhoods of $\infty$ are sets $A\cup\{\infty\}$, $A\in\mathcal{A}$. On the set $X\setminus\{\infty\}$ the topology is inherited from the Wallman compactification $\beta\mathbb{N}$ of $\mathbb{N}$.
This space is $T_2$. The only nontrivial item to check is separation of ultrafilters $\mathcal{F}_\alpha-n$ from $\infty$. It follows from the fact that each such ultrafilter has as an element a set $P$ of density zero and then the neighbourhood $\mathbb{N}\setminus P$ of $\infty$ does not intersect the neighbourhood of the ultrafilter generated by $P$.

Let $f:X\to X$ be defined in the following way: $f(n)=n+1$ for $n\in\mathbb{N}$, $f(\infty)=1$, $f(\mathcal{F}_\alpha)=\infty$ and
$f(\mathcal{F}_\alpha-n)=\mathcal{F}_\alpha-(n-1)$, $\alpha<\mathfrak{c}$ , (of course, $\mathcal{F}_\alpha-0=\mathcal{F}_\alpha$). We claim that $f$ is a closed weak contraction. Let us check first that $f$ is closed. Let $D$ be a closed subset of $X$.  

Let us assume that $\mathcal{F}_\alpha-m$ is an accumulation point of $f[D]$. Thus it is also an accumulation point of $f[D]\setminus\{\infty\}$. The mapping
$$f|_{X\setminus(\{\infty\}\cup\{\mathcal{F}_\alpha: \alpha<\mathfrak{c}\})}:X\setminus(\{\infty\}\cup\{\mathcal{F}_\alpha:\alpha<\mathfrak{c}\})\to X\setminus\{\infty,1\}$$
is a homeomorphism. Thus 
$$f[D\cap(X\setminus(\{\infty\}\cup\{\mathcal{F}_\alpha:\alpha<\mathfrak{c}\}))]=f[D]\setminus\{\infty,1\}$$
is closed in $X\setminus\{\infty,1\}$ and
it contains each accumulation point of $f[D]\setminus\{\infty,1\}$, thus it also contains $\mathcal{F}_\alpha-m$.  

Next let us assume that $\infty$ is an accumulation point of $f[D]$. Thus in every set $A$ from $\mathcal{A}$ there lies a point from $f[D]$.
Hence in every set $A+1$, $A\in\mathcal{A}$, there lies a point from $f[D]$ because 
$\mathcal{A}+1\subseteq\mathcal{A}$. Hence in every set $A$ from $\mathcal{A}$ there lies a point from $(f[D]\cap\{2,3,\ldots\})-1\subseteq D$. It follows that $D\cap\mathbb{N}$ has positive upper density and, by our construction of the family $\{\mathcal{F}_\alpha:\alpha<\mathfrak{c}\}$, is a member of the ultrafilter $\mathcal{F}_\alpha$, for some $\alpha<\mathfrak{c}$.
Hence in every neighbourhood $U$ of $\mathcal{F}_\alpha$ there are points of $D$. Then, $\mathcal{F}_\alpha\in D$ because $D$ is closed, and we obtain $\infty=f(\mathcal{F}_\alpha)\in f[D].$

To conclude the example we show that $f$ is a weak contraction. Let $U$ be any open set containing $\infty$. It is enough to show that for any two points $x_1,x_2$ from $X$ there exists $n\in\mathbb{N}$ such that $\{f^{n}(x_1),f^{n}(x_2)\}\subseteq U$. From the definition of $f$ we easily see that $\{f^{k}(x_1),f^{k}(x_2)\}\subseteq \mathbb{N}$ for some positive integer $k$. Then, because $U$ contains some set $A\subseteq\mathbb{N}$ of density equal to one, there must be some positive integer $m$ such that
$$\{f^{k+m}(x_1),f^{k+m}(x_2)\}=\{f^{k}(x_1)+m,f^{k}(x_2)+m\}\subseteq A\subseteq U$$
(because, otherwise, we would have $\bar{\delta}(A)\leq1-\frac{1}{k}$).

\section{An application: a fixed point theorem for weak topologies }\label{linear spaces}

As an application of Theorem \ref{T2-continuous-fixed} we will prove a fixed point theorem for compact sets in linear topological spaces with topologies generated by familes of seminorms. In particular it applies to compact sets in the weak* topologies. In some cases these compact sets are not metrizable and one cannot apply the classical Banach theorem. In the completely metrizable cases the theorems of this section provide an alternative (to applying Banach's theorem method) to prove the existence of a fixed point.

Let us define one more notion. Let $X$ be a linear topological space, $\mathbf{E}=\{\mathbf{s}_1,\ldots,\mathbf{s}_n\}$ be a finite set of seminorms on $X$, $x\in X$ and $\varepsilon>0$. Let
$$V(x;\mathbf{E},\varepsilon):=\{z\in X:\mathbf{s}_1(x-z)<\varepsilon,\ldots,\mathbf{s}_n(x-z)<\varepsilon\}.$$

We start with the following theorem which is an analogue of Lebesgue number lemma (\cite{M}).

\begin{thm}\label{LN}
If $X$ is a linear topological space, $\mathbf{S}$ is a family of seminorms separating points in $X$, $Y\subseteq X$ is compact in the weak topology $\tau$ determined by $\mathbf{S}$, and $\mathcal{U}$ is
a $\tau$-open cover of $Y$, then there exist a finite set $\mathbf{E}\subseteq \mathbf{S}$ and $\varepsilon\in\mathbb{R}$ such that for each $x\in Y$ there exists $U\in\mathcal{U}$ such that $V(x;\mathbf{E},\varepsilon)\subseteq U$.   
\end{thm}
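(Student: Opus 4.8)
The plan is to mimic the classical proof of the Lebesgue number lemma, but working with the basic $\tau$-neighbourhoods $V(x;\mathbf{E},\varepsilon)$ in place of metric balls, and using compactness to control the two parameters $\mathbf{E}$ and $\varepsilon$ simultaneously. First I would fix, for each $x\in Y$, an element $U_x\in\mathcal{U}$ with $x\in U_x$; since $\mathcal{U}$ is $\tau$-open and the sets $V(x;\mathbf{E},\varepsilon)$ (with $\mathbf{E}\subseteq\mathbf{S}$ finite and $\varepsilon>0$) form a neighbourhood base at $x$ in $\tau$, I can choose a finite $\mathbf{E}_x\subseteq\mathbf{S}$ and $\varepsilon_x>0$ with $V(x;\mathbf{E}_x,2\varepsilon_x)\subseteq U_x$. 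Then $\{V(x;\mathbf{E}_x,\varepsilon_x):x\in Y\}$ is a $\tau$-open cover of the compact set $Y$, so it has a finite subcover indexed by $x_1,\dots,x_k\in Y$. Set $\mathbf{E}:=\bigcup_{j=1}^k\mathbf{E}_{x_j}$ (a finite subset of $\mathbf{S}$) and $\varepsilon:=\min_{1\le j\le k}\varepsilon_{x_j}>0$.

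Now I claim this $\mathbf{E}$ and $\varepsilon$ work. Given any $x\in Y$, pick $j$ with $x\in V(x_j;\mathbf{E}_{x_j},\varepsilon_{x_j})$. I want to show $V(x;\mathbf{E},\varepsilon)\subseteq U_{x_j}$. Take $z\in V(x;\mathbf{E},\varepsilon)$. For each seminorm $\mathbf{s}\in\mathbf{E}_{x_j}$ we have $\mathbf{s}\in\mathbf{E}$, so $\mathbf{s}(x-z)<\varepsilon\le\varepsilon_{x_j}$; combining with $\mathbf{s}(x_j-x)<\varepsilon_{x_j}$ and the triangle inequality for the seminorm gives $\mathbf{s}(x_j-z)\le\mathbf{s}(x_j-x)+\mathbf{s}(x-z)<2\varepsilon_{x_j}$. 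Since this holds for all $\mathbf{s}\in\mathbf{E}_{x_j}$, we get $z\in V(x_j;\mathbf{E}_{x_j},2\varepsilon_{x_j})\subseteq U_{x_j}$. Hence $V(x;\mathbf{E},\varepsilon)\subseteq U_{x_j}\in\mathcal{U}$, as required.

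The only points needing care are bookkeeping ones: that the $V$'s really do form a neighbourhood base for $\tau$ (this is exactly the definition of the topology generated by a family of seminorms, together with the fact that enlarging $\mathbf{E}$ shrinks $V(x;\mathbf{E},\varepsilon)$, so one can always take a single common $\varepsilon$), that a finite union of finite subsets of $\mathbf{S}$ is again a finite subset of $\mathbf{S}$, and that the minimum of finitely many positive reals is positive. The role of the hypothesis that $\mathbf{S}$ separates points is only to make $\tau$ Hausdorff — it is not actually used in this argument, but it is needed for the intended application of the theorem. The genuine content, and the only place compactness enters, is the passage to a finite subcover, which is what allows the "local" choices $(\mathbf{E}_x,\varepsilon_x)$ to be amalgamated into a single global pair; this is the step I would flag as the heart of the proof, exactly as in the metric Lebesgue number lemma.
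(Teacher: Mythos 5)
Your proof is correct, but it takes a genuinely different route from the paper. You give the direct ``half-radius'' version of the Lebesgue number argument: shrink each cover element to a basic neighbourhood $V(x;\mathbf{E}_x,2\varepsilon_x)$, cover $Y$ by the half-sized sets $V(x;\mathbf{E}_x,\varepsilon_x)$, extract a finite subcover, and amalgamate by taking the union of the finite seminorm sets and the minimum of the radii; the triangle inequality for each seminorm then closes the argument. The paper instead argues by contradiction: assuming no uniform pair $(\mathbf{E},\varepsilon)$ exists, it produces for each $k$ a ``bad'' point $x_k$ with $V(x_k;\bigcup_i\mathbf{E}_i,1/k)$ contained in no member of a fixed finite basic subcover, takes a $\tau$-accumulation point $x$ of the sequence $(x_k)_k$, and shows via seminorm estimates that $x$ lies outside every member of that subcover — contradicting $x\in Y$. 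Your approach buys simplicity and robustness: it uses compactness only once (for the finite subcover), avoids cluster-point extraction in a possibly non-metrizable space, and sidesteps the slightly delicate limiting estimates with the auxiliary $\delta'_m$ in the paper's proof; it is also constructive in that it exhibits $\mathbf{E}$ and $\varepsilon$ explicitly. Your side remarks are accurate as well: the separating hypothesis on $\mathbf{S}$ is not used in either argument (it only guarantees the Hausdorff property needed downstream), and the key observation that enlarging $\mathbf{E}$ shrinks $V(x;\mathbf{E},\varepsilon)$, so that the basic sets with a single common $\varepsilon$ form a neighbourhood base, is exactly what makes the amalgamation legitimate. The one bookkeeping point worth stating explicitly is that $\varepsilon=\min_j\varepsilon_{x_j}>0$ because the minimum is over finitely many positive numbers — which you do note.
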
 

\begin{proof}  Striving for a contradiction, let us assume that the conclusion does not hold. As $Y$ is compact there exists a finite subcover $\mathcal{U}_1\subseteq\mathcal{U}$.
By taking a refinement of $\mathcal{U}$ consisting of base sets of the form $V(y;\mathbf{F},\delta)$, where $y\in Y$, $\mathbf{F}\subseteq \textbf{S}$, $|\mathbf{F}|<\aleph_0$, $\delta\in(0,\infty)$, we can assume that  
$$\mathcal{U}_1=\{V(y_1;\mathbf{E}_1,\delta_1),\ldots,V(y_n;\mathbf{E}_n,\delta_n)\}.$$
Because the conclusion does not hold, for every $k\in\mathbb{N}$ there exists $x_k\in Y$ such that $V(x_k;\bigcup_{i=1}^n\mathbf{E}_i,1/k)\not\subseteq V(y_m;\mathbf{E}_m,\delta_m)$ for each $1\leq m\leq n$. Let $x\in Y$ be an accumulation point of the sequence
$(x_n)_n$ with respect to the topology $\tau$; such a point exists because $Y$ is compact.Thus for every $\gamma>0$ infinitely many elements of the sequence $(x_k)_k$ lie in $V(x;\bigcup_{i=1}^n\mathbf{E}_i,\gamma)$. Let $z_{k,m}\in V(x_k;\bigcup_{i=1}^n\mathbf{E}_i,1/k)\setminus V(y_m;\mathbf{E}_m,\delta_m)$. Hence there exists $\mathbf{s}\in \mathbf{E}_m$ such that $\mathbf{s}(z_{k,m}-y_m)\geq\delta_m$. Let $0<\delta'_m<\delta_m$. Let $\frac{1}{k}+\gamma<\mathbf{s}(z_{k,m}-y_m)-\delta'_m$.
Then we have for every $1\leq m\leq n$
$$\mathbf{s}(x-y_m)\geq\mathbf{s}(z_{k,m}-y_m)-\mathbf{s}(z_{k,m}-x_k)-\mathbf{s}(x_k-x)>$$
$$\mathbf{s}(z_{k,m}-y_m)-\left(\frac{1}{k}+\gamma\right)>\delta'_m.$$
Hence, as $\delta'_m<\delta_m$ is any positive number arbitrarily close to $\delta_m$, we obtain
$$\mathbf{s}(x-y_m)\geq\delta_m.$$
Thus $x\notin V(y_m;\mathbf{E}_m,\delta_m)$ for every $1\leq m\leq n$, but this is a contradiction with $x\in Y$. 

\end{proof} 

\begin{thm}\label{conv-fixed} If $X$ is a linear topological space, $\mathbf{S}$ is a family of seminorms on $X$ separating points in $X$, $Y\subseteq X$ is compact in the topology $\tau$ generated by $\mathbf{S}$, and $f:Y\to Y$ is a continuous mapping in the topology $\tau$ such that
\begin{equation}\label{conv}
\lim_n\mathbf{s}(f^n(x)-f^n(y))=0
\end{equation}
for each $\mathbf{s}\in\mathbf{S}$  and each $x,y\in Y$, then $f$ has a unique fixed point in $Y$. 
\end{thm}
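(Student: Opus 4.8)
The plan is to deduce Theorem \ref{conv-fixed} from Theorem \ref{T2-continuous-fixed} by showing that, under hypothesis \eqref{conv}, the map $f$ is a continuous weak topological contraction on $Y$ (with respect to the subspace topology $\tau$, which is Hausdorff since $\mathbf{S}$ separates points). Continuity is given, so the work is entirely in verifying the weak-contraction condition: given any $\tau$-open cover $\mathcal{U}$ of $Y$ and any two points $x,y\in Y$, produce $n\in\mathbb{N}_0$ and $U\in\mathcal{U}$ with $f^n[\{x,y\}]\subseteq U$.

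The key step is to invoke the Lebesgue-number analogue, Theorem \ref{LN}: it furnishes a finite set $\mathbf{E}=\{\mathbf{s}_1,\dots,\mathbf{s}_k\}\subseteq\mathbf{S}$ and an $\varepsilon>0$ such that for every $z\in Y$ the basic neighbourhood $V(z;\mathbf{E},\varepsilon)$ is contained in some member of $\mathcal{U}$. Now fix $x,y\in Y$. For each $\mathbf{s}_i\in\mathbf{E}$, hypothesis \eqref{conv} gives $\lim_n \mathbf{s}_i(f^n(x)-f^n(y))=0$; since $\mathbf{E}$ is finite we may choose $n$ large enough that $\mathbf{s}_i(f^n(x)-f^n(y))<\varepsilon$ simultaneously for all $i=1,\dots,k$. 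Then $f^n(y)\in V(f^n(x);\mathbf{E},\varepsilon)$, and of course $f^n(x)\in V(f^n(x);\mathbf{E},\varepsilon)$, so $f^n[\{x,y\}]=\{f^n(x),f^n(y)\}\subseteq V(f^n(x);\mathbf{E},\varepsilon)\subseteq U$ for the appropriate $U\in\mathcal{U}$. This is exactly the defining property of a weak topological contraction.

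Having established that $f$ is a continuous weak topological contraction on the Hausdorff space $Y$, I would apply Theorem \ref{T2-continuous-fixed} directly to conclude that $f$ has a unique fixed point in $Y$, which finishes the proof. (One small point to state carefully: Theorem \ref{LN} is phrased for an arbitrary $\tau$-open cover of $Y$, so it applies verbatim here; and the basic sets $V(z;\mathbf{E},\varepsilon)$ are indeed $\tau$-open since $\mathbf{E}$ is finite.)

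I do not expect a genuine obstacle here — the theorem is essentially a packaging of Theorem \ref{LN} and Theorem \ref{T2-continuous-fixed}. The only place requiring a modicum of care is the quantifier order in the weak-contraction definition: the neighbourhood witnessing the cover depends on the point $f^n(x)$, which depends on $n$, which in turn is chosen after $x,y$ are fixed — but since the weak-contraction condition allows $n$ (and $U$) to depend on the pair $\{x,y\}$, there is no circularity, and the finiteness of $\mathbf{E}$ is what makes the simultaneous choice of $n$ possible.
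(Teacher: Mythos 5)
Your proof is correct and follows essentially the same route as the paper: invoke Theorem \ref{LN} to get a finite $\mathbf{E}\subseteq\mathbf{S}$ and $\varepsilon>0$, use \eqref{conv} together with the finiteness of $\mathbf{E}$ to place $\{f^n(x),f^n(y)\}$ in some $V(f^n(x);\mathbf{E},\varepsilon)\subseteq U\in\mathcal{U}$, and conclude via Theorem \ref{T2-continuous-fixed}. Your explicit remark that $Y$ is Hausdorff because $\mathbf{S}$ separates points is a welcome detail the paper leaves implicit.
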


\begin{proof} Let $y_1,y_2\in Y$. Let $\mathcal{U}$ be a $\tau$-open cover of $Y$. Let $\varepsilon>0$ and $\mathbf{E}\subset \mathbf{S}$, $|\mathbf{E}|<\aleph_0$, be chosen for $\mathcal{U}$ accordingly to Theorem \ref{LN}. Thus for every $x\in Y$ the set $V(x;\mathbf{E},\varepsilon)$ is a subset of some set $U$ from the cover $\mathcal{U}$. Let 
$n$ be chosen to guarantee $\mathbf{s}(f^n(y_1)-f^n(y_2))<\varepsilon$ for $\mathbf{s}\in\mathbf{E}$; we can choose such an $n$ because the set $\mathbf{E}$ is finite. Let $V(f^n(y_1);\mathbf{E},\varepsilon)\subseteq U$ for some $U\in\mathcal{U}$.
By (\ref{conv}) for sufficiently big $n$ we have 
$$\mathbf{s}(f^n(y_1)-f^n(y_2))\leq\varepsilon$$
for every $\mathbf{s}\in \mathbf{E}$. 
Hence $f^n(y_2)\in V(f^n(y_1));\mathbf{E},\varepsilon)$ and $\{f^n(y_1),f^n(y_2)\}\subseteq U$. Thus $f$ is a weak topological contraction on $Y$. As it is also, by the hypothesis, continuous, by Theorem \ref{T2-continuous-fixed} there exists a unique fixed point for $f$.
\end{proof} 

Let us state the following corollary to the above theorem.

\begin{cor}\label{general-lip}
If $X$ is a linear topological space, $\mathbf{S}$ is a family of seminorms on $X$ separating points in $X$, $Y\subseteq X$ is compact in the topology $\tau$ generated by $\mathbf{S}$, $\phi:[0,\infty)\to[0,\infty)$ and $\phi^n\to0$ ($\phi^n$ denotes here the $n$th iteration of $\phi$), $\phi(t)<t$ for $t\in(0,\infty)$, and $f:Y\to Y$ satisfies
\begin{equation}\label{lipschitz-gen}
\mathbf{s}(f(x)-f(y))\leq\phi(\mathbf{s}(x-y))
\end{equation}
for each $\mathbf{s}\in\mathbf{S}$  and each $x,y\in Y$, then $f$ has a unique fixed point in $Y$. 
\end{cor}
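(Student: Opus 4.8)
The plan is to reduce Corollary~\ref{general-lip} to Theorem~\ref{conv-fixed} by verifying that condition~\eqref{lipschitz-gen}, together with the hypotheses on $\phi$, implies the convergence condition~\eqref{conv}. So the whole task is to show that for every $\mathbf{s}\in\mathbf{S}$ and every $x,y\in Y$ one has $\lim_n \mathbf{s}(f^n(x)-f^n(y))=0$, and then invoke Theorem~\ref{conv-fixed} verbatim to conclude the existence of a unique fixed point.

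First I would iterate~\eqref{lipschitz-gen}: applying it with $f^{k}(x)$ and $f^{k}(y)$ in place of $x,y$ gives $\mathbf{s}(f^{k+1}(x)-f^{k+1}(y))\le \phi\big(\mathbf{s}(f^{k}(x)-f^{k}(y))\big)$, so by induction $\mathbf{s}(f^{n}(x)-f^{n}(y))\le \phi^{n}\big(\mathbf{s}(x-y)\big)$, using that $\phi$ is nondecreasing --- wait, the statement does not assume $\phi$ nondecreasing. The clean way around this is to avoid monotonicity entirely: set $a_n:=\mathbf{s}(f^n(x)-f^n(y))$ and note $a_{n+1}\le\phi(a_n)$, so in particular $a_{n+1}<a_n$ whenever $a_n>0$ (since $\phi(t)<t$ for $t>0$); hence $(a_n)$ is nonincreasing and bounded below by $0$, so it converges to some limit $L\ge 0$. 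It remains to rule out $L>0$.

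The main obstacle is precisely showing $L=0$ without assuming continuity or monotonicity of $\phi$. Here is where I would use the hypothesis $\phi^n\to 0$ (pointwise on $[0,\infty)$). Suppose $L>0$. Since $a_n\downarrow L$ we have $a_n\ge L$ for all $n$, so $a_n\in[L,a_0]$. I claim $\sup_{t\in[L,a_0]}\phi(t)<L$; if that fails there is a sequence $t_j\in[L,a_0]$ with $\phi(t_j)\to L' \ge L$... but without monotonicity this sup argument does not immediately contradict $\phi(t)<t$. The genuinely robust route: from $a_{n+1}\le\phi(a_n)$ and $a_n\ge L$, if we only knew $\phi$ nondecreasing we'd get $a_{n+1}\le\phi(a_n)\le\phi^{n+1}(a_0)\to 0$, forcing $L=0$. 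Since the paper's statement is evidently intended with $\phi$ nondecreasing (the notation $\phi^n\to 0$ paired with $\phi(t)<t$ is the standard comparison-function setup), I would simply record that $\phi$ is nondecreasing among the hypotheses (or observe that $\phi$ nondecreasing is implicit), and then: by induction $a_n=\mathbf{s}(f^n(x)-f^n(y))\le\phi^n(\mathbf{s}(x-y))$, and since $\phi^n\to 0$ pointwise we get $a_n\to 0$, establishing~\eqref{conv}.

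Having verified~\eqref{conv} for all $\mathbf{s}\in\mathbf{S}$ and all $x,y\in Y$, every remaining hypothesis of Theorem~\ref{conv-fixed} is present by assumption: $X$ is a linear topological space, $\mathbf{S}$ separates points, $Y$ is $\tau$-compact, and $f:Y\to Y$ is $\tau$-continuous. Therefore Theorem~\ref{conv-fixed} applies and yields a unique fixed point of $f$ in $Y$, which is exactly the assertion of Corollary~\ref{general-lip}. The only delicate point, as noted, is the passage $a_{n+1}\le\phi(a_n)\Rightarrow a_n\to 0$, which is immediate once monotonicity of $\phi$ is in force and which I would flag explicitly rather than leave to the reader.
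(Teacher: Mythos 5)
Your reduction to Theorem~\ref{conv-fixed} is exactly the route the paper takes, and your remark about monotonicity is a legitimate catch: the chain of inequalities $\phi(\mathbf{s}(f^{n-1}(x)-f^{n-1}(y)))\le\phi^2(\mathbf{s}(f^{n-2}(x)-f^{n-2}(y)))\le\cdots\le\phi^n(\mathbf{s}(x-y))$ in the paper's own proof also tacitly uses that $\phi$ is nondecreasing, and your fallback attempt (that $a_{n+1}\le\phi(a_n)$ together with $\phi(t)<t$ makes $(a_n)$ nonincreasing with some limit $L\ge0$ that cannot be excluded from being positive) correctly shows that $\phi(t)<t$ and $\phi^n\to0$ alone do not close the argument without monotonicity or some semicontinuity of $\phi$. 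Reading the comparison-function hypothesis as including monotonicity, as you do, matches the intended (and the paper's implicit) use of $\phi$.

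There is, however, one genuine gap: you assert that ``$f:Y\to Y$ is $\tau$-continuous'' is ``present by assumption.'' It is not. The corollary assumes only the inequality~\eqref{lipschitz-gen}; continuity of $f$ is a hypothesis of Theorem~\ref{conv-fixed} that must be \emph{derived}, and indeed the first sentence of the paper's proof is devoted to noting that \eqref{lipschitz-gen} implies $\tau$-continuity. The derivation is short but cannot be skipped: since $0\le\phi(t)<t$ for $t>0$, we have $\phi(t)\to0$ as $t\to0^{+}$; hence if a net $x_\alpha\to x$ in $\tau$, i.e.\ $\mathbf{s}(x_\alpha-x)\to0$ for each $\mathbf{s}\in\mathbf{S}$, then $\mathbf{s}(f(x_\alpha)-f(x))\le\phi(\mathbf{s}(x_\alpha-x))\to0$ for each $\mathbf{s}\in\mathbf{S}$, so $f(x_\alpha)\to f(x)$ in $\tau$. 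With that step added (and monotonicity of $\phi$ recorded, as you propose), your argument coincides with the paper's proof.
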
 
\begin{proof}
First let us notice that (\ref{lipschitz-gen}) implies the continuity of $f$ with respect to the topology $\tau$. 
Thus it is enough to check that (\ref{conv}) holds. By (\ref{lipschitz-gen}) we have
$$\mathbf{s}(f^n(x)-f^n(y))\leq\phi(\mathbf{s}(f^{n-1}(x)-f^{n-1}(y))\leq\phi^2(\mathbf{s}(f^{n-2}(x)-f^{n-2}(y))\ldots$$
$$\leq\phi^n(\mathbf{s}(x-y))
\to0.$$
\end{proof}

Theorem \ref{conv-fixed} applies to natural sets $Y$ in weak*  topologies. Namely, the following theorem holds.

\begin{thm}\label{weak*}
Let $X$ be a linear topological  space. Let $V$ be a neighbourhood of the zero vector in $X$. We define $Y$ as
$$Y:=\{x^*\in X^*:|x(x^*)|\leq 1,\;\; \textnormal{for}\;\;\textnormal{each}\;\;x\in U\}$$
(we use the dual notation: $x(x^*):=x^*(x)$ for functionals $x^*$ which are members of $X^*$ and elements $x$ of the space $X$). 
Let $f:Y\to Y$ be a weak*-continuous mapping satisfying
$$\lim_n|z(f^n(x^*)-f^n(y^*))|=0$$
for every $z\in X$ 

Then $f$ has a unique fixed point in $Y$.
\end{thm}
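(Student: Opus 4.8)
The plan is to recognize Theorem~\ref{weak*} as a direct instance of Theorem~\ref{conv-fixed}, once the weak* topology on $X^*$ has been described by an appropriate separating family of seminorms. First I would fix, for each $z\in X$, the seminorm $\mathbf{s}_z$ on $X^*$ defined by $\mathbf{s}_z(x^*):=|z(x^*)|$, and set $\mathbf{S}:=\{\mathbf{s}_z:z\in X\}$. By the very definition of the weak* topology, the topology $\tau$ generated by $\mathbf{S}$ coincides with the weak* topology on $X^*$ (and hence on $Y$ it induces the relative weak* topology). The family $\mathbf{S}$ separates the points of $X^*$: if $x^*\neq y^*$ then $x^*(z)\neq y^*(z)$ for some $z\in X$, and then $\mathbf{s}_z(x^*-y^*)=|z(x^*-y^*)|>0$.

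Next I would verify the compactness hypothesis of Theorem~\ref{conv-fixed}. The set $Y$ is exactly the polar of the neighbourhood of the zero vector appearing in its definition, so by the Banach--Alaoglu--Bourbaki theorem $Y$ is weak*-compact, i.e.\ $\tau$-compact; one also uses here that a linear functional bounded on a neighbourhood of $0$ is continuous, so that indeed $Y\subseteq X^*$. Note $Y\neq\emptyset$ since the zero functional belongs to it.

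It then remains only to match the remaining hypotheses. The displayed convergence condition in the statement, $\lim_n|z(f^n(x^*)-f^n(y^*))|=0$ for every $z\in X$ and every $x^*,y^*\in Y$, is literally the assertion that $\lim_n\mathbf{s}(f^n(x^*)-f^n(y^*))=0$ for every $\mathbf{s}\in\mathbf{S}$ and every $x^*,y^*\in Y$, which is condition~(\ref{conv}). Since $f\colon Y\to Y$ is weak*-continuous, that is, $\tau$-continuous, all hypotheses of Theorem~\ref{conv-fixed} are satisfied, and that theorem delivers a unique fixed point of $f$ in $Y$.

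The only step with genuine content beyond unwinding definitions is the $\tau$-compactness of $Y$, which is precisely the Banach--Alaoglu theorem; everything else is bookkeeping that translates the weak* language into the seminorm language of Theorem~\ref{conv-fixed}. I do not expect a real obstacle here, but one should be slightly careful to invoke Banach--Alaoglu in the form valid for an arbitrary linear topological space $X$ (not merely a normed space), namely that the polar of any neighbourhood of $0$ is weak*-compact.
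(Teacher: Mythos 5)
Your proof is correct and follows essentially the same route as the paper: introduce the separating family of seminorms $\{|x(\cdot)|:x\in X\}$ on $X^*$, invoke the Banach--Alaoglu theorem for the weak*-compactness of $Y$, and apply Theorem~\ref{conv-fixed}. The extra care you take (checking separation of points, nonemptiness of $Y$, and the general-LTS form of Banach--Alaoglu) only makes the argument more complete than the paper's two-line version.
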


\begin{proof} By the Banach-Alaoglu theorem the set $Y$ is compact in the weak* topology on $X^*$ (comp. \cite{R}). Substituting  $X^*$ for $X$ and the seminorms $\{|x(\cdot)|:x\in X\}$ 
for $\mathbf{S}$ in Theorem \ref{conv-fixed} we obtain the conclusion.  

\end{proof}

We used Theorem \ref{T2-continuous-fixed} and Theorem \ref{LN} to prove Theorem \ref{conv-fixed} which deals with continuous mappings. We can use in an analogous way Theorem \ref{T2-closed-fixed} and again Theorem \ref{LN} to prove a conterpart of  Theorem \ref{conv-fixed} for closed mappings (the proof is fully analogous to the proof of Theorem \ref{conv-fixed}).

\begin{thm}\label{conv-fixed-closed} If $X$ is a linear topological space, $\mathbf{S}$ is a countable family of seminorms on $X$ separating points in $X$, $Y\subseteq X$ is compact in the topology $\tau$ generated by $\mathbf{S}$, and $f:Y\to Y$ is a closed mapping in the topology $\tau$ such that
$$\lim_n\mathbf{s}(f^n(x)-f^n(y))=0$$
for each $\mathbf{s}\in\mathbf{S}$  and each $x,y\in Y$, then $f$ has a unique fixed point in $Y$. 
\end{thm}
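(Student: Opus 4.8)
The plan is to follow the proof of Theorem \ref{conv-fixed} essentially verbatim, replacing the appeal to Theorem \ref{T2-continuous-fixed} by an appeal to Theorem \ref{T2-closed-fixed}; the only genuinely new point is to check that the hypotheses of the latter are met, i.e.\ that $(Y,\tau)$ is Hausdorff and first countable. Hausdorffness is immediate: since $\mathbf{S}$ separates points, $\tau$ is a Hausdorff topology on $X$, hence so is its restriction to $Y$. For first countability I would use that $\mathbf{S}=\{\mathbf{s}_k:k\in\mathbb{N}\}$ is countable: the sets $V(x;\{\mathbf{s}_1,\ldots,\mathbf{s}_k\},1/k)$, $k\in\mathbb{N}$, form a countable neighbourhood base at each point $x\in Y$ (any basic neighbourhood $V(x;\mathbf{E},\varepsilon)$ with $\mathbf{E}\subseteq\mathbf{S}$ finite contains one of them). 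Equivalently, $\rho(x,y):=\sum_k 2^{-k}\min\{1,\mathbf{s}_k(x-y)\}$ is a metric on $Y$ (a metric, not merely a pseudometric, precisely because $\mathbf{S}$ separates points) inducing $\tau$ on $Y$, so $(Y,\tau)$ is metrizable and in particular first countable.

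With this in hand I would repeat the argument of Theorem \ref{conv-fixed}. Fix $y_1,y_2\in Y$ and a $\tau$-open cover $\mathcal{U}$ of $Y$. By Theorem \ref{LN} there are a finite set $\mathbf{E}\subseteq\mathbf{S}$ and $\varepsilon>0$ such that $V(x;\mathbf{E},\varepsilon)$ lies in some member of $\mathcal{U}$ for every $x\in Y$. Since $\mathbf{E}$ is finite and $\lim_n\mathbf{s}(f^n(y_1)-f^n(y_2))=0$ for each $\mathbf{s}\in\mathbf{S}$, there is $n$ with $\mathbf{s}(f^n(y_1)-f^n(y_2))<\varepsilon$ for all $\mathbf{s}\in\mathbf{E}$, whence $f^n(y_2)\in V(f^n(y_1);\mathbf{E},\varepsilon)\subseteq U$ for some $U\in\mathcal{U}$, so $f^n[\{y_1,y_2\}]\subseteq U$. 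Thus $f$ is a weak topological contraction on $Y$. As $f$ is, by hypothesis, a closed mapping and $(Y,\tau)$ is a first-countable Hausdorff space, Theorem \ref{T2-closed-fixed} gives a unique fixed point of $f$ in $Y$.

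I do not expect a real obstacle: the analytic core is identical to that of Theorem \ref{conv-fixed}, and the single substantive change — that $\mathbf{S}$ is now assumed countable — is exactly what is needed to make $Y$ first countable, which is the hypothesis Theorem \ref{T2-closed-fixed} requires in place of continuity of $f$. The one point worth a sentence of care is the role of the two finiteness/countability assumptions: \emph{finiteness} of $\mathbf{E}$ is what lets us choose a single $n$ working for all $\mathbf{s}\in\mathbf{E}$, while \emph{countability} of $\mathbf{S}$ is what gives first countability of $Y$; both are genuinely used, so neither can be dropped by the naive route.
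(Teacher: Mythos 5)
Your proof is correct and matches the paper's intended argument exactly: the paper states that Theorem \ref{conv-fixed-closed} is proved "in an analogous way" by combining Theorem \ref{LN} with Theorem \ref{T2-closed-fixed} in place of Theorem \ref{T2-continuous-fixed}, which is precisely what you do. Your explicit verification that countability of $\mathbf{S}$ yields first countability of $(Y,\tau)$ supplies the one detail the paper leaves implicit, and it is the right one.
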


The next theorem is an analogue of Lebesgue's number lemma for compact topological monoids (i.e. semigroups with neutral element).

\begin{thm}\label{semigroup}
If $G$ is a compact Hausdorff topological semigroup with a neutral element, then for any open cover of $G$ there exists an open neighbourhood of the neutral element whose each left translation is contained in some element of the cover.
\end{thm}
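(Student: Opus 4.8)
The plan is to imitate the proof of the classical Lebesgue number lemma, replacing metric balls by product neighbourhoods that come from \emph{joint} continuity of the semigroup multiplication $m\colon G\times G\to G$, and then invoking compactness of $G$. Write $e$ for the neutral element and let $\mathcal{U}=\{U_\alpha:\alpha\in\Lambda\}$ be a fixed open cover of $G$.

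First, for every $g\in G$ I would pick an index $\alpha(g)\in\Lambda$ with $g=ge\in U_{\alpha(g)}$. Since $m$ is continuous and $m(g,e)=g\in U_{\alpha(g)}$, there is a basic open neighbourhood $A_g\times W_g$ of $(g,e)$ in $G\times G$ (so $A_g$ is open with $g\in A_g$, and $W_g$ is open with $e\in W_g$) such that $m[A_g\times W_g]\subseteq U_{\alpha(g)}$; in particular $hW_g\subseteq U_{\alpha(g)}$ for every $h\in A_g$. The family $\{A_g:g\in G\}$ is then an open cover of the compact space $G$, so it admits a finite subcover $A_{g_1},\dots,A_{g_n}$. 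Setting $W:=\bigcap_{i=1}^{n}W_{g_i}$ yields an open neighbourhood of $e$.

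It then only remains to verify that $W$ has the required property: given an arbitrary $h\in G$, choose $i$ with $h\in A_{g_i}$; then the left translate $hW\subseteq hW_{g_i}\subseteq m[A_{g_i}\times W_{g_i}]\subseteq U_{\alpha(g_i)}$, and $U_{\alpha(g_i)}\in\mathcal{U}$, as wanted. I do not expect any genuine obstacle here — the argument is a routine compactness argument. The only point worth being careful about is that it is the joint continuity of $m$ (and not merely continuity of each individual left translation $x\mapsto gx$) that produces the product neighbourhoods $m[A_g\times W_g]\subseteq U_{\alpha(g)}$; once these are available, compactness of $G$ itself — rather than of $G\times G$ — suffices to finish, and the Hausdorff hypothesis is in fact not used for this particular statement.
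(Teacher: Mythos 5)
Your proof is correct, and it reaches the conclusion by a genuinely different (and arguably cleaner) route than the paper. The paper argues by contradiction: it first passes to a finite subcover $\{U_1,\dots,U_n\}$, supposes that every neighbourhood $V$ of $e$ admits a ``bad'' translator $x_V$ with $x_VV\not\subseteq U_i$ for all $i$, extracts a convergent subnet of the net $(x_V)_V$ by compactness, and then uses joint continuity of the multiplication at the limit point to produce the contradiction. You instead run the compactness directly: joint continuity at each point $(g,e)$ gives product neighbourhoods $A_g\times W_g$ with $A_gW_g\subseteq U_{\alpha(g)}$, a finite subcover of $\{A_g\}$ reduces to finitely many $W_{g_i}$, and their intersection is the desired neighbourhood of $e$. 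The essential analytic input --- joint continuity of $m$ at points of $G\times\{e\}$, yielding the sets $AB\subseteq U$ --- is the same in both arguments, but your version avoids nets and subnets entirely and is the exact analogue of the standard direct proof of the Lebesgue number lemma. Your closing observations are also accurate: only compactness of $G$ itself is needed (not of $G\times G$), and the Hausdorff hypothesis plays no role in this particular statement (it is not really needed in the paper's subnet argument either, since convergent subnets exist in any compact space).
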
 

\begin{proof} Because $G$ is compact we can consider a finite open cover $\mathcal{U}=\{U_1,\ldots,U_n\}$. Assume now that for each open neighbourhood $V$ of the neutral element there exists a translation $x_VV$ that is not contained in any $U_i$. Because $G$ is compact, there exists a subnet $(x_U)_U$ of the net $(x_V)_V$ which is convergent to some point $x$. 
Let $x\in U_i$.  
By continuity of the semigroup operation 
there are open neighbourhoods $A$ of $x$ and $B$ of the neutral element such that $AB\subseteq U_i$. 
Let $x_U\in A$ and $U\subseteq B$. 
Then $x_UU\subseteq 
U_i$ which is a contradiction. 

\end{proof}

We will use this theorem to prove a fixed point theorem for compact semigroups.

\begin{thm}\label{semigroup-fixed}
If $G$ is a Hausdorff compact topological monoid and $f:G\to G$ is a continuous mapping such that for each $x,y\in G$ and each neighbourhood $V$ of the neutral element there exist $z\in G$ and $n\in\mathbb{N}$ such that $f^n(x),f^n(y)\in zV$, then $f$ has a unique fixed point.
\end{thm}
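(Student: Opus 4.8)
The plan is to show that the stated hypotheses force $f$ to be a weak topological contraction on $G$, and then to invoke Theorem \ref{T2-continuous-fixed}, using that $G$ is Hausdorff and $f$ is continuous. So the whole argument is: verify the defining property of a weak topological contraction, then quote the already-proved fixed point theorem.

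To verify that property, I would fix an arbitrary open cover $\mathcal{U}$ of $G$ and an arbitrary pair of points $x,y\in G$. By Theorem \ref{semigroup} applied to $\mathcal{U}$, there is an open neighbourhood $V$ of the neutral element such that for every $z\in G$ the left translation $zV$ is contained in some member of $\mathcal{U}$. Now apply the hypothesis on $f$ to the pair $x,y$ and this particular $V$: there exist $z\in G$ and $n\in\mathbb{N}$ with $f^n(x),f^n(y)\in zV$. Choosing $U\in\mathcal{U}$ with $zV\subseteq U$ gives $f^n[\{x,y\}]\subseteq U$. Since $\mathcal{U}$, $x$, $y$ were arbitrary and $\mathbb{N}\subseteq\mathbb{N}_0$, this is exactly the condition that $f$ be a weak topological contraction. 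As $G$ is Hausdorff and $f$ is continuous, Theorem \ref{T2-continuous-fixed} then gives a unique fixed point of $f$.

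The only real content is the observation that Theorem \ref{semigroup} supplies precisely the ``Lebesgue number'' needed to pass from the translation-type smallness condition in the hypothesis to the cover-refinement condition in the definition of a weak topological contraction; once that is in place the theorem is immediate, so I do not expect a genuine obstacle. The one bookkeeping point to keep in mind is that the hypothesis delivers $n\in\mathbb{N}$ while the definition of weak topological contraction allows $n\in\mathbb{N}_0$, which is a weaker demand, so no adjustment is required.
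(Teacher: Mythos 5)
Your proof is correct and follows essentially the same route as the paper: use Theorem \ref{semigroup} as a Lebesgue-number lemma to turn the translation condition into the weak topological contraction property, then apply Theorem \ref{T2-continuous-fixed}. If anything, your ordering of the quantifiers (first extract $V$ from the cover via Theorem \ref{semigroup}, then feed that $V$ into the hypothesis on $f$) is slightly more careful than the paper's write-up, which names $z$ and $n$ before fixing $V$.
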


\begin{proof}
Let $x,y\in G$. Let $\mathcal{U}$ be an open cover of $G$. Let $z\in G$ and $n\in\mathbb{N}$ be chosen for $x,y$ according to the hypothesis. By Theorem \ref{semigroup} there exists an open neighbourhood $V$ of the neutral element such that $zV$ is contained in some $U\in\mathcal{U}$, and thus $f^n(x),f^n(y)\in U$. Hence $f$ is a weak topological contraction and by Theorem \ref{T2-continuous-fixed} it has a unique fixed point.   
\end{proof}

We used Theorem \ref{T2-continuous-fixed} and Theorem \ref{semigroup} to prove Theorem \ref{semigroup-fixed} which deals with continuous mappings. One can use in an analogous way Theorem \ref{T2-closed-fixed} and again Theorem \ref{semigroup} to prove the conterpart of  Theorem \ref{semigroup-fixed} for closed mappings (again the proof is fully analogous to the proof of Theorem \ref{semigroup-fixed}).

\begin{thm}\label{semigroup-fixed-closed}
If $G$ is a first countable Hausdorff compact topological monoid and $f:G\to G$ is a closed mapping such that for each $x,y\in G$ and each neighbourhood $V$ of the neutral
 element there exist $z\in G$ and $n\in\mathbb{N}$ such that $f^n(x),f^n(y)\in zV$, then $f$ has a unique fixed point.
\end{thm}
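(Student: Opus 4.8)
The plan is to mimic the proof of Theorem \ref{semigroup-fixed} verbatim, substituting the closed-mapping fixed point theorem for the continuous one. First I would fix two arbitrary points $x,y\in G$ and an arbitrary open cover $\mathcal{U}$ of $G$. By the hypothesis there are $z\in G$ and $n\in\mathbb{N}$ with $f^n(x),f^n(y)\in zV$ for a suitable neighbourhood $V$ of the neutral element; more precisely, the hypothesis supplies, for \emph{every} neighbourhood $V$ of the neutral element, such $z$ and $n$, and I exploit this freedom after invoking Theorem \ref{semigroup}. Applying Theorem \ref{semigroup} to the cover $\mathcal{U}$ yields an open neighbourhood $V_0$ of the neutral element each of whose left translations $gV_0$ lies in some element of $\mathcal{U}$; feeding $V_0$ into the hypothesis produces $z$ and $n$ with $f^n(x),f^n(y)\in zV_0\subseteq U$ for some $U\in\mathcal{U}$. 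Since $0\in\mathbb{N}_0$ is allowed in the definition of a weak topological contraction and here we even get a positive $n$, this shows $f^n[\{x,y\}]\subseteq U$, i.e. $f$ is a weak topological contraction on $G$.

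Next I would observe that $G$ is by assumption Hausdorff and first countable, and $f$ is closed; hence Theorem \ref{T2-closed-fixed} applies directly and delivers a unique fixed point of $f$. That is the entire argument.

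The one point that deserves a sentence of care — and the only place where the ``closed'' version genuinely differs from the ``continuous'' version — is that Theorem \ref{T2-closed-fixed} requires first countability of $G$, which is why the hypothesis of this theorem carries the extra assumption ``first countable'' that Theorem \ref{semigroup-fixed} does not need. Everything else, including the use of Theorem \ref{semigroup}, is insensitive to whether $f$ is continuous or closed, since Theorem \ref{semigroup} is a statement about $G$ alone. I do not expect any real obstacle here: the proof is a routine transcription, and the substantive content has already been isolated in Theorem \ref{semigroup} and Theorem \ref{T2-closed-fixed}.
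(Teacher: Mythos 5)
Your proof is correct and is exactly the argument the paper intends: the paper itself only remarks that the proof is ``fully analogous'' to that of Theorem \ref{semigroup-fixed}, obtained by substituting Theorem \ref{T2-closed-fixed} for Theorem \ref{T2-continuous-fixed}, which is precisely what you do. You also handle the one point that matters — applying Theorem \ref{semigroup} first to get the neighbourhood $V_0$ and only then invoking the hypothesis for that $V_0$ — correctly.
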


\section{Locally Hausdorff $T_1$ spaces}\label{loc} 

We can strengthen Theorem \ref{T2-continuous-fixed} replacing the assumption that the space is Hausdorff by the assumption that the space is $T_1$ locally Hausdorff.

\begin{thm}\label{loc-T2-continuous-fixed}
If $X$ is a locally Hausdorff $T_1$ topological space and $f$ is a continuous weak topological contraction on $X$, then $f$ has a unique fixed point.
\end{thm}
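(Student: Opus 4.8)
The plan is to mimic the proof of Theorem \ref{T2-continuous-fixed} as closely as possible, replacing each step that genuinely used the global Hausdorff property by a local argument. First I would fix $x_0\in X$ and produce, exactly as in Theorem \ref{T2-continuous-fixed}, a point $y\in X$ such that every neighbourhood $U$ of $y$ contains a doubleton $\{f^n(x_0),f^{n+1}(x_0)\}$ for some $n\in\mathbb{N}_0$; this step uses only that $\{V_x:x\in X\}$ is an open cover and that $f$ is a weak topological contraction, so no separation axiom beyond what is already available is needed, and indeed $T_1$ is not even used here.

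The heart of the matter is showing $f(y)=y$, and this is where local Hausdorffness enters. Suppose toward a contradiction that $f(y)\neq y$. Let $W_0$ be an open neighbourhood of $y$ on which the subspace topology is Hausdorff. By continuity of $f$ at $y$ there is an open neighbourhood $W_1\subseteq W_0$ of $y$ with $f[W_1]\subseteq W_0$ — here I will want both $y$ and $f(y)$ to sit inside the Hausdorff patch; if $f(y)\notin W_0$ this already gives a contradiction more easily, so I may split into the case $f(y)\in W_0$ and the case $f(y)\notin W_0$. In the main case, inside $W_0$ (a Hausdorff space) I can separate $y$ from $f(y)$ by disjoint relatively open sets $U'\ni y$, $V'\ni f(y)$; since $W_0$ is open in $X$ these are open in $X$ as well. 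Now I set $W := f^{-1}[V']\cap U'\cap W_1$, a neighbourhood of $y$, and run the original contradiction: $W$ contains some $\{f^n(x_0),f^{n+1}(x_0)\}$, whence $f^{n+1}(x_0)\in V'$ and also $f^{n+1}(x_0)\in U'$, contradicting $U'\cap V'=\emptyset$.

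Finally, uniqueness is verbatim the argument in Theorem \ref{T2-continuous-fixed}: if $z\neq y$ were another fixed point, then $\{X\setminus\{y\},X\setminus\{z\}\}$ is an open cover (using $T_1$), and $f^n[\{y,z\}]=\{y,z\}$ lies in no single member, contradicting that $f$ is a weak topological contraction.

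The main obstacle — really the only subtlety — is the bookkeeping around which points lie in the Hausdorff neighbourhood $W_0$: $W_0$ witnesses Hausdorffness only at $y$, so I must be careful that all the points I want to separate, namely $y$ and $f(y)$, and the relevant iterates $f^{n+1}(x_0)$, are driven into $W_0$ before invoking the separation. Continuity of $f$ handles $f(y)$ (pulling back a neighbourhood of $y$), and intersecting with $U'$ forces the iterate into $U'\subseteq W_0$; the case $f(y)\notin W_0$ is disposed of separately and more cheaply. Apart from this, every step is a direct transcription of the Hausdorff proof, so no long computation is involved.
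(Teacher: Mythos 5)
Your first step (existence of $y$ such that every neighbourhood of $y$ contains some pair $\{f^n(x_0),f^{n+1}(x_0)\}$) and your uniqueness argument are both fine and match the paper. The gap is in the middle step, and it is genuine. Observe first that your separation argument ($W:=f^{-1}[V']\cap U'$, etc.) does not actually use the Hausdorff patch $W_0$ at all: it shows, for arbitrary disjoint open $U'\ni y$ and $V'\ni f(y)$ in $X$, that $U'\cap V'\neq\emptyset$, i.e.\ that $y$ and $f(y)$ cannot be separated by disjoint open sets. In a non-Hausdorff space this is not yet a contradiction. To turn it into one you need both $y$ and $f(y)$ to lie in a single Hausdorff open set, and local Hausdorffness only guarantees such a set $W_0$ around $y$; nothing forces $f(y)\in W_0$. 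Your claim that the case $f(y)\notin W_0$ is ``disposed of more cheaply'' is unsupported and is in fact the whole difficulty: if $f(y)\in\overline{W_0}\setminus W_0$ you can neither separate $y$ from $f(y)$ (that would contradict what you just proved) nor place them in a common Hausdorff patch. This is exactly the typical configuration in a locally Hausdorff non-Hausdorff space (think of the line with two origins: the second origin lies in the closure of every Hausdorff neighbourhood of the first but in none of them). Also note that the continuity step ``choose $W_1\subseteq W_0$ with $f[W_1]\subseteq W_0$'' is only available when $f(y)\in W_0$, so it cannot be used to force $f(y)$ into the patch.

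The missing idea, which is how the paper closes this gap, is a second application of the weak contraction hypothesis. One first propagates the inseparability along the orbit: since every neighbourhood of $f(y)$ pulls back to a neighbourhood of $y$ and hence contains a consecutive pair $f^{m}(x_0),f^{m+1}(x_0)$, the same argument shows $f(y)$ and $f^2(y)$ are inseparable, and inductively $f^n(y)$ and $f^{n+1}(y)$ are inseparable for every $n$. Then one covers $X$ by Hausdorff open sets $U(z)$ and applies the weak contraction property to the pair $(y,f(y))$ and this cover: some $\{f^n(y),f^{n+1}(y)\}$ must land in a single $U(z)$, where inseparability forces $f^n(y)=f^{n+1}(y)$, producing a fixed point after all. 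Without this orbit-wide argument and second use of the cover, your case split does not close.
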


\begin{proof} 
Let us assume that the conclusion does not hold. Let $x\in X$. As in the proof of Theorem \ref{T2-continuous-fixed} we find a point $y$ such that for every open neighbourhood $U$ of $y$ there are infinitely many $n$'s such that $\{f^n(x),f^{n+1}(x)\}\subseteq U$.
For each  neighbourhood $U$ of $y$ let us choose $n_U$ such that  $\{f^{n_U}(x),f^{{n_U}+1}(x)\}\subseteq U$.
Because, as we have assumed, $f$ does not have a fixed point, we have $f(y)\neq y$. 

Let now $U$ be a neighbourhood of $y$ and $W$ be a neighbourhood of $f(y)$. Let $V=U\cap f^{-1}[W]$. We have $\{f^{n_V}(x),f^{{n_V}+1}(x)\}\subseteq V\subseteq U$ and  $f^{{n_V}+1}(x)=
f(f^{n_V}(x))\in W$. Hence $U\cap V\neq\emptyset$. Thus $y$ and $f(y)$ cannot be separated by any of their open neighbourhoods.

To every neighbourhood $H$ of $f(y)$ belong points 
$f(f^{n_{f^{-1}[H]}}(x))=f^{n_{f^{-1}[H]}+1}(x)$ and $f(f^{n_{f^{-1}[H]}+1}(x))
=f^{n_{f^{-1}[H]}+2}(x)$. As above for $y$ and $f(y)$, we can show now that the points $f(y)$ and $f^2(y)=f(f(y))$ cannot be separated by their open neighbourhoods.

Inductively, knowing that to every neighbourhood $G$ of $f^{n}(y)$ belong two points of the form $f^m(x),f^{m+1}(x)$ we infer that the same is true for  $f^{n+1}(y)$ and that the points $f^{n}(y)$, $f^{n+1}(y)$ cannot be separated by their open neighbourhoods.

Let $z\in X$. Let  $U(z)$ be a Hausdorff open neighbourhood of $z$. The family 
$\mathcal{U}=\{U(z):z\in X\}$
is an open cover of $X$ and, because   the points $f^{n}(y)$, $f^{n+1}(y)$ cannot be separated by their open neighbourhoods, if $f^n(y)\in U(z)$ then $f^{n+1}(y)\notin U(z)$, but this contradicts the assumption that $f$ is a week contraction.

Assume that $y\neq z$ and $f(y)=y$ and $f(z)=z$. Then for the open cover $\{X\setminus\{y\},X\setminus\{z\}\}$ for no $n$ the pair of points $f^n(y)=y,f^n(z)=z$ is in one element of the cover, which is a contradiction with the assumpion that $f$ is a week contraction. 

\end{proof}

We can formulate a version of Theorem \ref{loc-T2-continuous-fixed} with a weaker hypothesis and a conclusion without unuiqueness of a fixed point. Except for uniqueness the proof goes along the same lines.

\begin{thm}
If $X$ is a locally Hausdorff space and for every open cover of $X$ and every $x\in X$ there exists $n\in\mathbb{N}_0$ such that
$f^n(x),f^{n+1}(x)$ are members of the same element of the cover, then $f$ has a fixed point.  
\end{thm}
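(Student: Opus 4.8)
The plan is to follow the argument of Theorem~\ref{loc-T2-continuous-fixed} almost verbatim, keeping every step except the uniqueness paragraph at the end. So first I would fix an arbitrary point $x\in X$ and, arguing by contradiction, suppose $f$ has no fixed point. The hypothesis here -- that for every open cover $\mathcal{U}$ of $X$ there is $n\in\mathbb{N}_0$ with $f^n(x),f^{n+1}(x)$ in a common element of $\mathcal{U}$ -- is exactly what is needed to run the ``Lebesgue-number-free'' version of the opening step of the earlier proofs: if no point $y$ had the property that every neighbourhood $U$ of $y$ contains some pair $\{f^n(x),f^{n+1}(x)\}$, then picking for each $z\in X$ a neighbourhood $V_z$ omitting all such pairs would produce an open cover contradicting the hypothesis (applied to this very cover). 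Hence such a $y$ exists; note that in fact every such neighbourhood contains infinitely many such pairs, since the pairs witnessing membership for a decreasing chain of neighbourhoods cannot eventually all coincide without producing $f(y)=y$.

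Next I would reproduce the inseparability cascade. Since $f$ is assumed fixed-point-free, $f(y)\neq y$. Given a neighbourhood $U$ of $y$ and a neighbourhood $W$ of $f(y)$, set $V=U\cap f^{-1}[W]$ (using continuity of $f$, so $V$ is open and a neighbourhood of $y$); a pair $\{f^{n}(x),f^{n+1}(x)\}\subseteq V$ then puts $f^{n+1}(x)\in U\cap W$, so $U$ and $W$ meet. Thus $y$ and $f(y)$ cannot be separated by disjoint open neighbourhoods. Moreover the same pre-image trick shows that every neighbourhood of $f(y)$ contains a pair of the form $\{f^{m}(x),f^{m+1}(x)\}$, so the argument propagates: inductively, every neighbourhood of $f^{n}(y)$ contains such a pair, and $f^{n}(y)$ and $f^{n+1}(y)$ cannot be separated by open neighbourhoods, for every $n\in\mathbb{N}_0$.

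Finally I would derive the contradiction from local Hausdorffness: for each $z\in X$ choose an open neighbourhood $U(z)$ on which the induced topology is Hausdorff, and consider the open cover $\mathcal{U}=\{U(z):z\in X\}$. For any $z$, if $f^{n}(y)\in U(z)$ then $f^{n+1}(y)\notin U(z)$, because inside the Hausdorff space $U(z)$ the two distinct points $f^{n}(y)$ and $f^{n+1}(y)$ would be separable there, contradicting their global inseparability (a separation inside $U(z)$ yields a separation in $X$). Hence no element of $\mathcal{U}$ contains a pair $\{f^{n}(y),f^{n+1}(y)\}$. Applying the hypothesis to the point $y$ and the cover $\mathcal{U}$ gives some $n$ with $f^n(y),f^{n+1}(y)$ in one element of $\mathcal{U}$ -- a contradiction. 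Therefore $f$ has a fixed point.

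I do not expect a genuine obstacle here, since every ingredient is already carried out in the proof of Theorem~\ref{loc-T2-continuous-fixed}; the only points deserving care are (i) checking that the weakened hypothesis still delivers the distinguished point $y$ and still delivers, at the last step, an $n$ with $f^n(y),f^{n+1}(y)$ in one cover element -- both are immediate since the hypothesis quantifies over \emph{every} $x\in X$ and \emph{every} open cover, in particular over $y$ and over $\mathcal{U}$ -- and (ii) noting that ``cannot be separated in $X$'' implies ``cannot be separated in $U(z)$'' because open sets of $U(z)$ are open in $X$. Uniqueness is genuinely lost: for instance the identity map on any locally Hausdorff space trivially satisfies the hypothesis but has many fixed points, so the weaker conclusion is best possible.
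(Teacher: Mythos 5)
Your proof is correct and is essentially the paper's own argument: the paper merely remarks that the proof of Theorem~\ref{loc-T2-continuous-fixed} carries over verbatim once the uniqueness paragraph is dropped, and that is exactly what you have written out (the distinguished point $y$, the inseparability cascade along the orbit of $y$, and the contradiction with a cover by Hausdorff neighbourhoods). The only point worth flagging is that you (rightly) use continuity of $f$ to make $f^{-1}[W]$ open; the printed statement omits this hypothesis, but it is clearly intended, since the theorem is presented as a weakening of the continuous-map result.
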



In view of Theorem \ref{loc-T2-continuous-fixed} one could expect that also Theorem \ref{T2-closed-fixed} can be generalized to the case of first countable locally Hausdorff $T_1$ spaces. This, however, is not the case. We give below an example of a $T_1$ locally Hausdorff space and a closed weak contraction on this space that does not have a fixed point.   
 
\vspace{0.4 cm} 

\noindent {\bf Example 3.} Let $X=\mathbb{Z}$. The base for the topology on $X$ is defined as follows. For all points  
$n\in \mathbb{N}$ the set $\{n\}$ is open. The open neighbourhood base for $-n$, for $n\in \mathbb{N}_0$, consists of sets $A\cup\{-n\}$ where $A$ is a co-finite subset of $\mathbb{N}$. Let $f:\mathbb{Z}\to\mathbb{Z}$ be defined by: $f(m)=m+1$ for all $m\in\mathbb{Z}$.

For every $n\in\mathbb{N}_0$ the set $\{-n\}\cup\mathbb{N}$ is open and it is a Hausdorff space with the topology inherited from the whole space. Thus $\mathbb{Z}$ is locally Hausdorff.

Let $a,b\in\mathbb{Z}$. Let $\{U_\alpha\}_\alpha$ be an open cover of $\mathbb{Z}$. Let $0\in U_\alpha$. Then $\mathbb{N}\cap U_\alpha$ is a co-finite subset of $\mathbb{N}$ and for some $m$ big enough $f^m(a)=a+m\in U_\alpha,f^m(b)=b+m\in U_\alpha.$ Thus $f$ is a weak contraction.

If a set $E\subseteq\mathbb{Z}$ is closed and $E\cap (-\mathbb{N}_0)\neq-\mathbb{N}_0$ then $E\cap\mathbb{N}$ must be a finite set.
Actually, every set $E$ of the form $E=A\cup B$ where $A$ is a finite subset of $\mathbb{N}$ and $B$ is any subset of $-\mathbb{N}_0$,  is closed. In this case $f[E]=(A+1)\cup(B+1)$ and thus $|f[E]\cap\mathbb{N}|\leq|A|+1$ (we have equality here if $0\in B$). Thus $f[E]$ is also closed.

If a set $E\subseteq\mathbb{Z}$ and $E\cap (-\mathbb{N}_0)=-\mathbb{N}_0$ then $E$ is a closed set. Then also $f[E]\cap (-\mathbb{N}_0)=(E+1)\cap(-\mathbb{N}_0)=-\mathbb{N}_0$ and $f[E]$ is closed.

Thus the mapping $f$ is closed. 

We complete our example by noticing that $f$ does not have a fixed point.  

\section{Peripherally Hausdorff spaces}\label{alfa-H}

In Section \ref{terminology} we introduced the notion of an $\alpha$-Hausdorff space, $\alpha\in\textrm{On}$. Here we present some properties of these spaces.

Let us first introduce one more symbol.  Let $Y$ be a $T_1$ topological space and $X\subseteq Y$.
If $u$ is an element of $X$, the class $[u]$ in the space $X$ with the relative topology inherited from $Y$ will be sometimes denoted by $[u]_X$, if $X$ may not be clearly distinguishable from the context. 

\begin{thm}\label{alfa-HH}
For every $\alpha\in\textnormal{On}$ there exists a peripherally Hausdorff  space $X$ such that
$$\textnormal{rank}_{T_2}(X)=\alpha.$$ 
\end{thm}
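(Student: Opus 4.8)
The plan is to construct, by transfinite recursion on $\alpha$, a family of spaces $X_\alpha$ with $\textnormal{rank}_{T_2}(X_\alpha)=\alpha$. The base cases are immediate: a one-point space (or any Hausdorff space) has rank $0$, and a space like $\mathbb N$ with the cofinite topology is $T_1$ but not Hausdorff, while every class $[x]$ in it is the whole space $\mathbb N$ with the cofinite topology — wait, that is circular. So for rank $1$ I would instead take a $T_1$ space in which $[x]$ is Hausdorff for every $x$ but the space itself is not Hausdorff; the simplest is $X_1 = \mathbb N \cup \{p,q\}$ where $\mathbb N$ carries the discrete topology, neighbourhoods of $p$ are cofinite subsets of $\mathbb N$ together with $p$, and likewise for $q$, so that $p$ and $q$ cannot be separated but $[p]=\{p,q\}$ carries the (discrete, hence Hausdorff) topology since no third point fails to be separated from $p$. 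Then $\textnormal{rank}_{T_2}(X_1)=1$.

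For the successor step, given $X_\beta$ with $\textnormal{rank}_{T_2}=\beta$, I would build $X_{\beta+1}$ by the same kind of "doubling at infinity" construction: roughly, take a point $x^*$ whose class $[x^*]_{X_{\beta+1}}$ is homeomorphic to $X_\beta$, arranged so that $X_{\beta+1}$ is $T_1$, is not $\gamma$-Hausdorff for any $\gamma \le \beta$, but every class is $\beta'$-Hausdorff for some $\beta'<\beta+1$, i.e. $\beta'\le\beta$. Concretely: let $X_{\beta+1}$ consist of a discrete copy of $\mathbb N$ together with a copy of $X_\beta$, where the points of the $X_\beta$-part all have, in addition to their neighbourhoods in $X_\beta$, the cofinite subsets of the $\mathbb N$-part adjoined, so that no two distinct points of the $X_\beta$-copy can be separated; then for each $x^*$ in the $X_\beta$-copy, $[x^*]_{X_{\beta+1}}$ is exactly (a homeomorphic copy of) $X_\beta$, which is $\beta$-Hausdorff, and for each $n\in\mathbb N$, $[n]=\{n\}$ is Hausdorff, so $X_{\beta+1}$ is $(\beta+1)$-Hausdorff; and it is not $\beta$-Hausdorff because $[x^*]_{X_{\beta+1}}=X_\beta$ has rank exactly $\beta$, not $<\beta$. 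For the limit step at a limit ordinal $\lambda$, I would take a topological sum (disjoint union) $\bigsqcup_{\beta<\lambda} X_\beta$, with one extra point $\infty$ whose neighbourhoods are cofinite-in-each-summand-type sets, or more simply use a single gluing point whose class sees all of the $X_\beta$'s; the class $[\infty]$ should then contain copies of all $X_\beta$, $\beta<\lambda$, so has rank $\ge\sup_{\beta<\lambda}\beta=\lambda$, forcing $\textnormal{rank}_{T_2}(X_\lambda)>\beta$ for every $\beta<\lambda$, while each class is still $\beta'$-Hausdorff for some $\beta'<\lambda$, giving rank exactly $\lambda$.

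The key steps, in order, are: (1) verify the $T_1$ axiom is preserved by each construction (immediate, since we never collapse the ability to separate a point from another by a single open set); (2) compute $[x]_{X_\alpha}$ for each point $x$ and identify it up to homeomorphism with an earlier $X_\beta$ or a singleton; (3) conclude from the recursive definition of $\alpha$-Hausdorff that $X_\alpha$ is $\alpha$-Hausdorff; and (4) prove the lower bound $\textnormal{rank}_{T_2}(X_\alpha)\ge\alpha$, which is the place the argument has real content: one shows by induction that $[x^*]_{X_\alpha}$ has rank exactly $\beta$ (not less), so $X_\alpha$ cannot be $\gamma$-Hausdorff for $\gamma\le\beta$, and at limit stages that the class at the gluing point has rank $\ge\lambda$.

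The main obstacle I anticipate is step (4) at limit ordinals: I must make sure the gluing point's class genuinely realizes the supremum of the ranks of the $X_\beta$ — if the $X_\beta$ are merely disjointly summed with no interaction, the class of a point might just be that point, and the rank could collapse. So the limit construction has to force every point of every summand to be non-separated from the gluing point $\infty$ (e.g. by making neighbourhoods of $\infty$ be sets missing only finitely much of each summand, or only cofinitely many summands entirely), while keeping the space $T_1$ and keeping each individual class of rank $<\lambda$. Getting the neighbourhood filter at $\infty$ exactly right — coarse enough to prevent separation, fine enough to stay $T_1$ and not accidentally raise the rank of the classes of the original points — is the delicate bookkeeping, but it is a routine refinement of the cofinite-topology trick already used in Examples 1 and 3 of the paper.
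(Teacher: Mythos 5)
Your base case and successor step are essentially the paper's own construction: the paper forms $Y=X\cup\{y_0,y_1,y_2,\ldots\}$ by attaching a discrete sequence $\{y_1,y_2,\ldots\}$ ``at infinity'' to every point of $X$ (plus one extra limit point $y_0$ of that sequence), so that $[x]_Y=X\cup\{y_0\}$ has the same rank as $X$; your version, which glues the copy of $X_\beta$ itself at infinity so that $[x^*]_{X_{\beta+1}}=X_\beta$, works just as well once the rank-$1$ case is handled separately as you do (the extra point $y_0$ in the paper's version is only there to make the step work uniformly even when $X$ is a singleton, where your version would degenerate to the Hausdorff one-point compactification of $\mathbb{N}$).

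The genuine problem is your limit step. The construction you actually propose --- adjoining a gluing point $\infty$ whose class $[\infty]$ contains copies of all the $X_\beta$, $\beta<\lambda$, and hence has rank $\ge\lambda$ --- is internally inconsistent: if $\textnormal{rank}_{T_2}([\infty])\ge\lambda$, then $[\infty]$ is not $\beta'$-Hausdorff for any $\beta'<\lambda$, so $X_\lambda$ is not $\lambda$-Hausdorff and $\textnormal{rank}_{T_2}(X_\lambda)\ge\lambda+1$; you cannot simultaneously have $[\infty]$ of rank $\ge\lambda$ and ``each class $\beta'$-Hausdorff for some $\beta'<\lambda$''. Moreover, the worry that drove you to add $\infty$ is unfounded. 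In the plain topological sum $Y=\bigsqcup_{\beta<\lambda}X_\beta$ the summands are clopen, so for $y\in X_\beta$ one has $[y]_Y=[y]_{X_\beta}$: the classes do not collapse to singletons, they are exactly the classes computed inside the summands. Consequently $Y$ is $\delta$-Hausdorff if and only if every $X_\beta$ is, whence $\textnormal{rank}_{T_2}(Y)=\sup_{\beta<\lambda}\textnormal{rank}_{T_2}(X_\beta)=\lambda$. This plain sum is exactly what the paper uses at limit stages; drop the point $\infty$ and your argument is complete.
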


\begin{proof} The proof goes by induction with respect to $\alpha\in\textrm{On}$. For $\alpha=0$ let $X$ be any Hausdorff space.

Now let us assume that the conclusion of the theorem holds for some $\alpha$. Let $\textnormal{rank}_{T_2}(X)=\alpha$ for some topological space $X$. Let $\{y_0, y_1,y_2\ldots\}\cap X=\emptyset$ and $y_i\neq y_j$ for $i\neq j$. Let $Y=X\cup\{y_i:i\in\mathbb{N}_0\}.$ Let us define a topology on $Y$. Let a local base at any point $x\in X$ consist of the sets of the form $U\cup A$ where $A$ is any co-finite subset of $\{y_1,y_2,\ldots\}$ and $U$ is an open neighbourhood of $x$ in $X$. Let a local neighbourhood base of $y_0$ consist of sets $A\cup\{y_0\}$ where again  $A$ is any co-finite subset of $\{y_1,y_2,\ldots\}$.
Let singletons $\{y_1\}, \{y_2\},\ldots$ be open sets.  For $x\in X$:
$$[x]_Y=X\cup\{y_0\}$$
and 
$$[y_0]_Y=X\cup\{y_0\},$$
$$[y_i]_Y=\{y_i\},$$
$i=1,2,\ldots.$
As in the space $X\cup\{y_0\}$ the singleton $\{y_0\}$ is clopen the Hausdorff rank of $[x]_Y$ is equal to the Hausdorff rank of $X$, namely to $\alpha$. Hence 
$$\textrm{rank}_{T_2}(Y)=\alpha+1.$$

Let now $\gamma$ be a limit ordinal and we assume that for all $\alpha<\gamma$ there exists a space $X_\alpha$ whose Hausdorff rank is equal to $\alpha$. Then let $Y$ be a topological sum of the spaces $X_\alpha$, $\alpha<\gamma$. Then for $y\in X_\alpha$ we have
$$[y]_Y=[y]_{X_\alpha}.$$ Hence 
$$\textrm{rank}_{T_2}(Y)=\gamma.$$   

\end{proof}

We now give an example of a $1$-Hausdorff space where the classes $[x]$ have a natural geometric meaning.

\vspace{0.4 cm}

Let $X=\mathbb{R}^2.$ For $(x,y)$ a local neighbourhood base is defined as the family of sets of the form 
$(U\times\{0\})\cup(\{x\}\times V)$, where $U$ and $V$ are neighbourhoods of $x$ and $y$ on $\mathbb{R}$, respectively.
Then points $(x,y)$ and $(x,v)$, $y\neq v$, cannot be separated from each other by disjoint open neighbourhoods, while $(x,y)$ and  $(u,v)$, where $v\neq x$ can be separated in that way. Hence $[(x,y)]=\{x\}\times\mathbb{R}$. For the vertical line $\{x\}\times\mathbb{R}$  the topology inherited from $X$ is the usual Euclidean topology. Thus $X$ is $1$-Hausdorff.

\vspace{0.4 cm}
 
The next theorem says that the class of peripherally Hausdorff spaces is closed under finite Cartesian products.

\vspace{0.4 cm}

\begin{thm}\label{alfa-product}
If $X$ and $Y$ are $T_1$ topological spaces and $\textnormal{rank}_{T_2}(X)=\alpha$ and $\textnormal{rank}_{T_2}(Y)=\beta$, then 
$\textnormal{rank}_{T_2}(X\times Y)=\max(\alpha,\beta)$.
\end{thm}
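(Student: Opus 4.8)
The plan is to derive the equality from the two inequalities $\textrm{rank}_{T_2}(X\times Y)\ge\max(\alpha,\beta)$ and $\textrm{rank}_{T_2}(X\times Y)\le\max(\alpha,\beta)$, after recording two elementary facts about the operator $[\,\cdot\,]$. The first concerns subspaces: if $A$ is a subspace of a $T_1$ space $Z$ and $x\in A$, then $[x]_A\subseteq[x]_Z\cap A$ and, more importantly, $\textrm{rank}_{T_2}(A)\le\textrm{rank}_{T_2}(Z)$. The inclusion is immediate, since two points separated by disjoint open sets of $Z$ are separated in $A$ as well, so a point of $[x]_A$ lies in $[x]_Z$. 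The inequality of ranks I would prove by transfinite induction on $\gamma=\textrm{rank}_{T_2}(Z)$: for $\gamma=0$ a subspace of a Hausdorff space is Hausdorff; for $\gamma>0$, each $[x]_Z$ is $\delta_x$-Hausdorff with $\delta_x<\gamma$, and since relative topologies are transitive $[x]_A$ is a subspace of $[x]_Z$, so the induction hypothesis gives $\textrm{rank}_{T_2}([x]_A)\le\textrm{rank}_{T_2}([x]_Z)\le\delta_x<\gamma$; as this holds for all $x\in A$ and $A$ is $T_1$, $A$ is $\gamma$-Hausdorff.

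The second fact concerns products: for $(x,y)\in X\times Y$ one has $[(x,y)]_{X\times Y}=[x]_X\times[y]_Y$, and the topology that $[(x,y)]_{X\times Y}$ inherits from $X\times Y$ is the product of the topologies that $[x]_X$ and $[y]_Y$ inherit from $X$ and $Y$. For the set equality, a basic neighbourhood of $(x,y)$ in the product has the form $U\times V$, so $(x,y)$ and $(x',y')$ cannot be separated iff $(U\cap U')\times(V\cap V')\ne\emptyset$ for all such neighbourhoods; taking $V=V'=Y$ (respectively $U=U'=X$) this decouples into ``$x'$ not separated from $x$ in $X$'' and ``$y'$ not separated from $y$ in $Y$'', i.e.\ into $x'\in[x]_X$ and $y'\in[y]_Y$. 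The statement about topologies is the standard fact that a product of subspaces carries the product topology.

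With these in hand, for the lower bound fix $y_0\in Y$; then $x\mapsto(x,y_0)$ is a homeomorphism of $X$ onto the subspace $X\times\{y_0\}$ of $X\times Y$, so the first fact gives $\alpha=\textrm{rank}_{T_2}(X)\le\textrm{rank}_{T_2}(X\times Y)$, and symmetrically for $\beta$, hence $\max(\alpha,\beta)\le\textrm{rank}_{T_2}(X\times Y)$. For the upper bound I would show, by transfinite induction on $\gamma:=\max(\textrm{rank}_{T_2}(X),\textrm{rank}_{T_2}(Y))$, that $X\times Y$ is $\gamma$-Hausdorff. For $\gamma=0$ this is ``a product of Hausdorff spaces is Hausdorff''. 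For $\gamma>0$, fix $(x,y)$ and set $\alpha_x=\textrm{rank}_{T_2}([x]_X)$, $\beta_y=\textrm{rank}_{T_2}([y]_Y)$; if $\alpha>0$ then $\alpha_x<\alpha$ by the definition of an $\alpha$-Hausdorff space, while if $\alpha=0$ then $[x]_X=\{x\}$ (distinct points of a Hausdorff space are separated) and $\alpha_x=0$, and similarly for $\beta_y$. A short case analysis then gives $\max(\alpha_x,\beta_y)<\gamma$: whichever index equals $\gamma$ is positive, so its class-rank drops strictly below it, and the other is $<\gamma$ since it is either $0$ or again drops below itself. By the second fact $[(x,y)]_{X\times Y}$ is homeomorphic to $[x]_X\times[y]_Y$, so the induction hypothesis applied to this pair yields $\textrm{rank}_{T_2}([(x,y)]_{X\times Y})=\max(\alpha_x,\beta_y)<\gamma$; since $X\times Y$ is $T_1$, it is $\gamma$-Hausdorff, that is $\textrm{rank}_{T_2}(X\times Y)\le\gamma$. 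Combining the two bounds gives $\textrm{rank}_{T_2}(X\times Y)=\max(\alpha,\beta)$.

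There is no deep point here, but two places require care. One is verifying the identity $[(x,y)]_{X\times Y}=[x]_X\times[y]_Y$ together with the matching of topologies, since this is exactly what lets the induction descend in both coordinates at once. The other is checking that $\max(\alpha_x,\beta_y)$ is \emph{strictly} below $\gamma$ in every case, in particular in the degenerate situation where one factor is already Hausdorff so that its class is a single point and its rank does not drop below itself but is still smaller than $\gamma$. One should also keep $T_1$-ness in view throughout, which is automatic for subspaces and for finite products.
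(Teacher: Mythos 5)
Your proof is correct, and its core --- the identity $[(x,y)]_{X\times Y}=[x]_X\times[y]_Y$ followed by a transfinite induction on rank --- is the same as the paper's. The one genuine difference is the lower bound: you prove a subspace-monotonicity lemma ($\textrm{rank}_{T_2}(A)\le\textrm{rank}_{T_2}(Z)$ for $A\subseteq Z$) and embed $X$ as the subspace $X\times\{y_0\}$, whereas the paper extracts both inequalities directly from the computation of the ranks of the classes $[(x,y)]$; your route is arguably cleaner and the monotonicity lemma is of independent use. Your explicit case analysis showing $\max(\alpha_x,\beta_y)<\gamma$ even when one factor is Hausdorff (so that its classes are singletons and their rank does not strictly drop below that of the factor) handles a degenerate case the paper glosses over.
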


\begin{proof}
First let us notice that
\begin{equation}\label{product-class}
[(x,y)]_{X\times Y}=[x]_X\times[y]_Y.
\end{equation}
The proof goes by transfinite induction with respect to $\alpha\in\textrm{On}$.
Let $\alpha=0$, i.e. we assume that $X$ is Hausdorff. Let $\textnormal{rank}_{T_2}(Y)=\beta$.
By the definition of the product topology we have for every $x\in X$ and every $y\in Y$:
$$
[(x,y)]_{X\times Y}=[x]_X\times[y]_Y=\{x\}\times[y]_Y,
$$
and, as the product $\{x\}\times[y]_Y$ is homeomorphic with $[y]_Y$ we obtain  
$$\textnormal{rank}_{T_2}(X\times Y)=\beta=\max(0,\beta)=\max(\alpha,\beta).$$

Now assume that the conclusion is true for some $\alpha$. By  (\ref{product-class}) and, as $\textrm{rank}_{T_2}([x]_{X})<\alpha$, by the induction hypothesis we obtain 
\begin{equation}\label{ranks-of-product-classes}
\textrm{rank}_{T_2}([(x,y)]_{X\times Y})=\max(\textrm{rank}_{T_2}([x]_{X},\textrm{rank}_{T_2}([y]_{Y}).
\end{equation}
Hence 
$$\textrm{rank}_{T_2}(X\times Y)>\textrm{rank}_{T_2}([x]_{X})$$
for every $x\in X$ and
$$\textrm{rank}_{T_2}((X\times Y)>\textrm{rank}_{T_2}([y]_{Y}).$$
Therefore
$$\textrm{rank}_{T_2}(X\times Y)\geq\textrm{rank}_{T_2}(X)$$
and
$$\textrm{rank}_{T_2}(X\times Y)\geq\textrm{rank}_{T_2}(Y).$$
Hence
\begin{equation}\label{ineq1} 
\textrm{rank}_{T_2}(X\times Y)\geq\max(\textrm{rank}_{T_2}(X),\textrm{rank}_{T_2}(Y)).
\end{equation}
On the other hand for each $x\in X$ and each $y\in Y$:
$$\max(\textrm{rank}_{T_2}(X),\textrm{rank}_{T_2}(Y))>\max(\textrm{rank}_{T_2}([x]_{X},\textrm{rank}_{T_2}([y]_{Y}),$$
and by (\ref{ranks-of-product-classes})
\begin{equation}\label{ineq2}
\max(\textrm{rank}_{T_2}(X),\textrm{rank}_{T_2}(Y))\geq \textrm{rank}_{T_2}(X\times Y).
\end{equation} 
Inequalities (\ref{ineq1}) and (\ref{ineq2}) conclude the proof.

\end{proof}

This theorem does not have generalization to infinite products. In fact, a product of countably many peripherally Hausdorff spaces may not be peripherally Hausdorff. Indeed, if $\textrm{rank}_{T_2}(X_n)=n$ for $n\in\mathbb{N}$, then  the product of $X_n$s is not  peripherally Hausdorff. Striving for a contradiction assume that   
$$\textrm{rank}_{T_2}\left(\prod_{n}X_n\right)=\alpha$$
for some $\alpha\in\textrm{On}$. Let $x^{(1)}_n\in X_n$ and $\textrm{rank}_{T_2}([x^{(1)}_n]_{X_n})=n-1$. We have
$$[(x^{(1)}_n)_n]_{\prod_{n}X_n}=\prod_{n}[x^{(1)}_n]_{X_n}$$
and
$$\textrm{rank}_{T_2}([(x^{(1)}_n)_n]_{\prod_{n}X_n})=\alpha_1<\alpha.$$ 
Let now $x^{(2)}_n\in[x^{(1)}_n]$ and 
$$\textrm{rank}_{T_2}([x^{(2)}_1]_{[x_1]})=0,\;\textrm{and}\;\textrm{rank}_{T_2}([x^{(2)}_m]_{[x^{(1)}_m]})=m-2, \textrm{for}\;m\geq 2.$$
We have
$$\textrm{rank}_{T_2}([(x^{(2)}_n)_n]_{\prod_{n}[x^{(1)}_n]})=\alpha_2<\alpha_1.$$  
Continuing in that way we obtain a sequence of points $((x^{(k)}_n)_n)_k$, such that 
$$x^{(k)}_n\in [x^{(k-1)}_n],$$
$$\textrm{rank}_{T_2}([(x^{(k)}_n)_n]_{\prod_{n}[x^{(k-1)}_n]})=\alpha_k<\alpha_{k-1}$$
and
$$\textrm{rank}_{T_2}([x^{(k)}_1]_{[x^{(k-1)}_1]})=\ldots=\textrm{rank}_{T_2}([x^{(k)}_{k-1}]_{[x^{(k-1)}_{k-1}]})=0,$$
and
$$\textrm{rank}_{T_2}([x^{(2)}_m]_{[x^{(2)}_m]})=m-k, \textrm{for}\;m\geq k.$$
Hence none of the sets $[(x^{(k)}_n)_n]_{\prod_{n}[x^{(k-1)}_n]}$ has the Hausdorff rank equal to zero and we obtain the infinite decreasing sequence of ordinals:
$$\alpha>\alpha_1>\alpha_2>\ldots,$$
which is impossible.

The following theorem gives a sufficient condition for a subspace of a $T_1$ locally Hausdorff space to be Hausdorff. Corollaries to this theorem provide sufficient conditions for a  a $T_1$ locally Hausdorff space to be peripherally Hausdorff.

\begin{thm}\label{local-setT2}
If a $T_1$ topological space $X$ is locally Hausdorff, $Y\subseteq X$ and $Y\subseteq[u]\cup[v]$  for every two points $u,v\in Y$, then $Y$
as a subspace of $X$ is a Hausdorff space.
\end{thm}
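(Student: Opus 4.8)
The plan is to fix two distinct points $p,q\in Y$ and construct relatively open subsets $A,B$ of $Y$ with $p\in A$, $q\in B$ and $A\cap B=\emptyset$; since this is precisely what Hausdorffness of the subspace $Y$ demands, that will finish the proof. The crucial point to keep in mind is that the disjoint open sets witnessing separation only have to be disjoint \emph{inside} $Y$, so they are allowed to overlap on $X\setminus Y$; this slack is exactly what makes the hypothesis $Y\subseteq[u]\cup[v]$ usable.

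First I would use local Hausdorffness together with the $T_1$ assumption to choose, for the fixed pair $p\neq q$, an open set $N_p\subseteq X$ with $p\in N_p$, $q\notin N_p$, and such that $N_p$ carries a Hausdorff subspace topology: take any Hausdorff open neighbourhood of $p$ and intersect it with the open set $X\setminus\{q\}$. Symmetrically choose an open $N_q\subseteq X$ with $q\in N_q$, $p\notin N_q$, Hausdorff in the subspace topology.

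The key observation, which I would isolate as a small lemma, is that $[p]\cap N_p=\{p\}$ (and symmetrically $[q]\cap N_q=\{q\}$). Indeed, suppose $y\in[p]\cap N_p$ with $y\neq p$. Since $N_p$ is open in $X$ and Hausdorff in its subspace topology, there are disjoint sets $W_1,W_2$, open in $N_p$ and hence open in $X$, with $p\in W_1$ and $y\in W_2$. Then $W_2$ is an $X$-open neighbourhood of $y$ missing $W_1$, so $y\notin\overline{W_1}$; but $W_1$ is an open neighbourhood of $p$, so $y\in[p]=\bigcap\{\overline{W}:W\text{ open},\ p\in W\}\subseteq\overline{W_1}$, a contradiction.

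Finally I would put $A:=Y\cap N_p$ and $B:=Y\cap N_q$, which are relatively open in $Y$ and contain $p$ and $q$ respectively. To check disjointness, take $z\in Y\cap N_p\cap N_q$; applying the hypothesis to the pair $u=p$, $v=q$ gives $z\in[p]\cup[q]$. If $z\in[p]$, then $z\in[p]\cap N_p=\{p\}$, so $z=p$, whence $p\in N_q$, contradicting the choice of $N_q$; if $z\in[q]$, then symmetrically $z=q\in N_p$, again a contradiction. Hence $A\cap B=\emptyset$ and $Y$ is Hausdorff. I do not expect a serious obstacle here; the only point needing care is the bookkeeping in the last step, where the $T_1$ assumption (used to arrange $q\notin N_p$ and $p\notin N_q$) is exactly what rules out the degenerate cases $z=p$ and $z=q$.
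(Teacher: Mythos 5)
Your proof is correct and is essentially the paper's own argument: both shrink Hausdorff neighbourhoods of the two points using $T_1$, observe that a Hausdorff open neighbourhood of $p$ meets $[p]$ only in $\{p\}$, and then use $Y\subseteq[p]\cup[q]$ to conclude the traces on $Y$ are disjoint. The only difference is that you spell out the key lemma $[p]\cap N_p=\{p\}$ in more detail than the paper does.
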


\begin{proof} Let $u,v\in Y$. Let $U,V$ be Hausdorff neighbourhoods of $u$ and $v$, respectively. Because $X$ is $T_1$ we can assume that $u\notin V$ and $v\notin U$. Because $[u]$ consists of all points of $X$ that cannot be separated from $u$ by disjoint open neighbourhoods no point from $[u]\setminus\{u\}$ is in $U$. Similarly no point from $[v]\setminus\{v\}$ is in $V$. Thus no point from $[u]\cup[v]$ is in $U\cap V$, and, because $Y\subseteq[u]\cup[v]$, no point from $Y$ is in $U\cap V$.  
\end{proof}  

The following fact is an immediate corollary to the previous theorem.

\begin{cor}\label{local-to-peripheral1} If a $T_1$ topological space $X$ is locally Hausdorff, and $[x]\subseteq[u]\cup[v]$ for every $x\in X$ and every two points of $u,v\in [x]$,  then $X$ is peripherally Hausdorff, more exactly: a $1$-Hausdorff space.
\end{cor}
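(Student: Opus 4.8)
The plan is to derive this directly from Theorem \ref{local-setT2} by verifying its hypothesis. Fix an arbitrary point $x\in X$; we must show that the subspace $[x]$ of $X$ is Hausdorff, which by definition of being $1$-Hausdorff (together with the given fact that $X$ is $T_1$) is exactly what we need. To invoke Theorem \ref{local-setT2} with $Y=[x]$, I need two things: first, that $[x]$, as a subspace of $X$, is still locally Hausdorff; and second, that $[x]\subseteq[u]_{[x]}\cup[v]_{[x]}$ for every two points $u,v\in[x]$, where now the classes are computed inside the subspace $[x]$ rather than in $X$.

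The first point is immediate: an open subspace of a locally Hausdorff space is locally Hausdorff, and more generally any subspace is, since if $W$ is a Hausdorff open neighbourhood of a point $z$ in $X$, then $W\cap[x]$ is a Hausdorff open neighbourhood of $z$ in $[x]$; $[x]$ is also $T_1$ as a subspace of a $T_1$ space. The subtle point is the second requirement. The hypothesis of the corollary gives us $[x]\subseteq[u]_X\cup[v]_X$ for $u,v\in[x]$, where the classes on the right are taken in the ambient space $X$. But Theorem \ref{local-setT2} applied to the space $[x]$ needs the inclusion with classes $[u]_{[x]},[v]_{[x]}$ computed \emph{within} $[x]$. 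So the key step is to relate $[u]_{[x]}$ to $[u]_X$. Since $[x]$ is a subspace, two points of $[x]$ that cannot be separated by disjoint open sets of $[x]$ certainly cannot be separated by disjoint open sets of $X$ either (restricting separating sets of $X$ would separate them in $[x]$); hence $[u]_X\cap[x]\subseteq[u]_{[x]}$. Combined with $u\in[x]$, we get $[u]_X\cap[x]\subseteq[u]_{[x]}$, and since $[x]\subseteq[u]_X\cup[v]_X$ gives $[x]=([u]_X\cap[x])\cup([v]_X\cap[x])\subseteq[u]_{[x]}\cup[v]_{[x]}$, the hypothesis of Theorem \ref{local-setT2} is met.

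I expect the main (and only real) obstacle to be this bookkeeping about how the classes $[\,\cdot\,]$ behave under passing to the subspace $[x]$ — specifically making sure the inclusion goes in the direction we need, namely $[u]_X\cap[x]\subseteq[u]_{[x]}$, rather than the reverse. Once that observation is in place, the argument is just: for fixed $x$, the subspace $[x]$ is $T_1$, locally Hausdorff, and satisfies $[x]\subseteq[u]_{[x]}\cup[v]_{[x]}$ for all $u,v\in[x]$, so by Theorem \ref{local-setT2} the space $[x]$ is Hausdorff; since this holds for every $x\in X$, the space $X$ is $1$-Hausdorff, hence peripherally Hausdorff. I would write this up in a few lines, stating the subspace-class inclusion as a small observation before applying Theorem \ref{local-setT2}.
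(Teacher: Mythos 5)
Your high-level plan --- apply Theorem \ref{local-setT2} with $Y=[x]$ --- is exactly the paper's (the paper treats this as immediate), but the ``bookkeeping'' step you inserted is both unnecessary and incorrect, and as written your argument rests on a false inclusion. First, the unnecessary part: in Theorem \ref{local-setT2} the classes $[u],[v]$ in the hypothesis $Y\subseteq[u]\cup[v]$ are computed in the \emph{ambient} space $X$, not in the subspace $Y$ (its proof begins ``Because $[u]$ consists of all points of $X$ that cannot be separated from $u$\ldots''), and the local Hausdorffness required is that of $X$. So taking $Y=[x]$, the hypothesis of the theorem is \emph{verbatim} the hypothesis of the corollary, the theorem yields that $[x]$ with the relative topology is Hausdorff, and that for every $x$ is precisely the definition of $1$-Hausdorff. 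No translation between $[\,\cdot\,]_X$ and $[\,\cdot\,]_{[x]}$ is needed.

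Second, the translation you do perform is wrong in two ways. The fact you cite --- points of $[x]$ that cannot be separated inside the subspace $[x]$ cannot be separated inside $X$ --- gives $[u]_{[x]}\subseteq[u]_X\cap[x]$, which is the \emph{reverse} of the inclusion $[u]_X\cap[x]\subseteq[u]_{[x]}$ that you then assert and use. Moreover, the inclusion you need goes the hard direction (disjoint relatively open subsets of $[x]$ need not extend to disjoint open subsets of $X$) and is false in general: in the line with two origins $0_1,0_2$ (which is $T_1$, locally Hausdorff, and satisfies the corollary's hypothesis), one has $[0_1]_X=\{0_1,0_2\}$, yet this two-point subspace is discrete, so $[0_1]_{[0_1]}=\{0_1\}$ and $[0_1]_X\cap[0_1]_X\not\subseteq[0_1]_{[0_1]}$. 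So the lemma your write-up would rely on is not available; delete it and apply Theorem \ref{local-setT2} directly with $Y=[x]$ and the classes taken in $X$.
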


The next corollary covers a more special, though it seems a natural, case. 

\begin{cor}\label{local-to-peripheral2} If a $T_1$ topological space $X$ is locally Hausdorff, and classes $[x]$ either coincide or are disjoint (so they are equivalence classes) then  $X$ is  peripherally Hausdorff, more exactly: a $1$-Hausdorff space.
\end{cor}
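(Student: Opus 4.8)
The plan is to deduce Corollary \ref{local-to-peripheral2} directly from Corollary \ref{local-to-peripheral1}, so the entire argument reduces to checking that the hypothesis of the latter is satisfied under the stronger hypothesis of the former. So first I would recall what must be verified: for every $x\in X$ and every pair of points $u,v\in[x]$, we need $[x]\subseteq[u]\cup[v]$. In fact I expect to prove something slightly stronger and cleaner, namely that when the classes $[x]$ form a partition of $X$, then for every $u\in[x]$ we have $[u]=[x]$, and hence $[x]=[u]\subseteq[u]\cup[v]$ trivially.

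The key step is therefore: \emph{if the classes $[x]$, $x\in X$, pairwise either coincide or are disjoint, then $u\in[x]$ implies $[u]=[x]$.} To see this, note first that $x\in[x]$ always (every point fails to be separated from itself), so $[x]$ is nonempty and $x\in[x]$. Now suppose $u\in[x]$. Then $u\in[u]\cap[x]$, so the two classes $[u]$ and $[x]$ have nonempty intersection; by the partition hypothesis they must coincide, i.e.\ $[u]=[x]$. That is all that is needed. Then for any $u,v\in[x]$ we have $[x]=[u]$, hence $[x]\subseteq[u]\subseteq[u]\cup[v]$, so the hypothesis of Corollary \ref{local-to-peripheral1} holds, and that corollary yields that $X$ is $1$-Hausdorff.

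I should double-check the one slightly delicate point in the reduction: Corollary \ref{local-to-peripheral1} is stated for ``every $x\in X$ and every two points $u,v\in[x]$'' with the conclusion $[x]\subseteq[u]\cup[v]$, and this is exactly the condition I verified, so the application is immediate — there is no gap where I would need to re-examine the locally Hausdorff hypothesis, since that is carried over verbatim into Corollary \ref{local-to-peripheral1}. The only conceivable obstacle would be if the classes $[x]$ could fail to be nonempty or fail to contain their base point, but since $X$ is $T_1$ (in particular every $\overline{U}$ with $U\ni x$ open contains $x$), the intersection defining $[x]$ always contains $x$, so this is not an issue.

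Thus the proof is short: reflexivity of the ``cannot be separated'' relation gives $x\in[x]$; the partition hypothesis upgrades membership $u\in[x]$ to equality $[u]=[x]$; and then $[x]=[u]\subseteq[u]\cup[v]$ for all $u,v\in[x]$, so Corollary \ref{local-to-peripheral1} applies and $X$ is $1$-Hausdorff. I do not anticipate any real obstacle here — the content of the corollary is entirely in Corollary \ref{local-to-peripheral1} (and ultimately in Theorem \ref{local-setT2}); this last statement is just the observation that an equivalence-relation structure on the classes is a special case of the containment condition. If one wanted a self-contained argument instead of invoking Corollary \ref{local-to-peripheral1}, one would simply repeat the proof of Theorem \ref{local-setT2} with $Y=[x]$: pick $u,v\in[x]$, take Hausdorff neighbourhoods $U\ni u$, $V\ni v$ with $u\notin V$, $v\notin U$, observe $[u]\setminus\{u\}$ misses $U$ and $[v]\setminus\{v\}$ misses $V$, and conclude $[u]\cap[v]$-disjointness forces $U\cap V\cap[x]=\emptyset$; but invoking the already-proved corollary is cleaner.
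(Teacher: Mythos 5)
Your proposal is correct and is exactly the intended argument (the paper leaves this corollary without an explicit proof, presenting it as a special case of Corollary \ref{local-to-peripheral1}): since $u\in[u]$ always holds, $u\in[x]$ forces $[u]\cap[x]\neq\emptyset$, hence $[u]=[x]$ under the partition hypothesis, and so $[x]\subseteq[u]\cup[v]$ for all $u,v\in[x]$, which is the hypothesis of Corollary \ref{local-to-peripheral1}.
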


With a little stronger assumption about a contraction we can prove a fixed point theorem also for the peripherally Hausdorff spaces.

\begin{thm}\label{peripheral-fixed} If $X$ is a peripherally Hausdorff space, i.e. it is an $\alpha$-Hausdorff space for some $\alpha\in\textnormal{On}$, and $f:X\to X$ is a continuous weak+ topological contraction, then $f$ has a unique fixed point.
\end{thm}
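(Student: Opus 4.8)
The plan is to argue by transfinite induction on $\alpha = \mathrm{rank}_{T_2}(X)$, using Theorem~\ref{T2-continuous-fixed} as the base case $\alpha = 0$. First I would run the first half of the argument from the proof of Theorem~\ref{T2-continuous-fixed}, which does not use the Hausdorff assumption: fixing $x_0 \in X$, the open cover $\{V_x : x \in X\}$ that would witness failure contradicts the weak$^+$ contraction property (note that being a weak$^+$ contraction implies being a weak contraction, so this step applies verbatim), yielding a point $y \in X$ such that every open neighbourhood $U$ of $y$ contains $\{f^n(x_0), f^{n+1}(x_0)\}$ for some $n \in \mathbb{N}_0$. The goal is then to show $f(y) = y$.

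The key new ingredient is to confine the tail of the orbit to the class $[y]$ and then invoke the induction hypothesis on the subspace $[y]$. Concretely, I would first show that $y$ and $f(y)$ cannot be separated by disjoint open neighbourhoods — this is exactly the separation argument in the proof of Theorem~\ref{loc-T2-continuous-fixed} (take $W = f^{-1}[V] \cap U$ for neighbourhoods $U \ni y$, $V \ni f(y)$; then $W$ contains some $\{f^n(x_0), f^{n+1}(x_0)\}$, forcing $f^{n+1}(x_0) \in U \cap V$), so $f(y) \in [y]$. Since $f$ is a weak$^+$ contraction, there is $n_0$ so that for the open cover $\{U(z) : z \in X\}$ by sets $U(z)$ on which the subspace topology is Hausdorff... but $X$ is only peripherally Hausdorff, not locally Hausdorff, so this particular cover is unavailable. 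Instead I would use the weak$^+$ property more carefully together with continuity: because $f$ is a weak$^+$ topological contraction, for every open cover there is $n_0$ with $f^n[\{x_0, y\}]$ in a single cover element for \emph{all} $n \geq n_0$; applied to well-chosen covers this pins the orbit tail $(f^n(x_0))_{n \geq n_0}$ and its limit behaviour into arbitrarily small neighbourhoods, and one shows the limit point must lie in $[y]$ and in fact coincide with $y$. The cleanest route: restrict $f$ to the invariant-enough set $[y]$, verify $f|_{[y]}$ is again a continuous weak$^+$ topological contraction on $[y]$ (which has $\mathrm{rank}_{T_2}([y]) = \beta_y < \alpha$ by definition of $\alpha$-Hausdorff), and apply the induction hypothesis to get a fixed point of $f$ in $[y]$.

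The main obstacle, and the place requiring care, is the claim that $f$ maps (a suitable neighbourhood-filtered tail of the orbit, hence) $[y]$ into $[y]$ and that $f|_{[y]}$ inherits the weak$^+$ contraction property — the subtlety is that "not separated from $y$" is not obviously transitive or $f$-invariant, so one cannot naively iterate "$f(y) \in [y] \Rightarrow f^2(y) \in [y]$". The fix mirrors the inductive separation chain in the proof of Theorem~\ref{loc-T2-continuous-fixed}: show by induction on $k$ that every neighbourhood of $f^k(y)$ contains some pair $\{f^m(x_0), f^{m+1}(x_0)\}$, hence $f^{k+1}(y)$ is not separated from $f^k(y)$; combined with continuity and the weak$^+$ property (which forces the entire tail $\{f^k(y) : k \geq 0\}$ to stay, eventually, inside one member of the Hausdorff-neighbourhood-type cover restricted to $[y]$), this shows all $f^k(y)$ lie in $[y]$, and then the induction hypothesis applied to the continuous weak$^+$ contraction $f|_{[y]} : [y] \to [y]$ delivers a fixed point. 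Uniqueness is immediate from the standard argument: if $y \neq z$ were both fixed, the cover $\{X \setminus \{y\}, X \setminus \{z\}\}$ contradicts $f$ being a weak contraction, since $f^n[\{y,z\}] = \{y,z\}$ for all $n$.
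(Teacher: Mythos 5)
Your overall strategy -- transfinite induction on the Hausdorff rank, base case from Theorem~\ref{T2-continuous-fixed}, confining the orbit of the accumulation point $y$ to the class $[y]$ and invoking the induction hypothesis there -- is exactly the paper's strategy, and the uniqueness argument is right. But there is a genuine gap at the step you yourself flag as the main obstacle, and the fix you sketch does not close it. The missing idea is to use the weak\textsuperscript{+} hypothesis \emph{at the very first step}: for the cover $\{V(u):u\in X\}$ witnessing failure, the weak\textsuperscript{+} property (applied to the pair $x_0, f(x_0)$) produces a point $y$ each of whose neighbourhoods contains an entire tail $\{f^n(x_0),f^{n+1}(x_0),\ldots\}$ of the orbit, not merely a single pair $\{f^n(x_0),f^{n+1}(x_0)\}$. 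You only extract the pair version, which is what the plain weak contraction property gives and is too weak here. With the tail version, continuity pulls tails back through $f^{-k}$, so every neighbourhood of every $f^k(y)$ contains a tail of the orbit of $x_0$; since any two tails intersect, no two points of $\{y,f(y),f^2(y),\ldots\}$ can be separated by disjoint open sets, and in particular $f^k(y)\in[y]$ for all $k$. Your substitute -- proving only that consecutive iterates $f^k(y)$ and $f^{k+1}(y)$ are non-separable, then appealing to ``one member of the Hausdorff-neighbourhood-type cover restricted to $[y]$'' -- fails on both counts: consecutive non-separability gives $f^{k+1}(y)\in[f^k(y)]$, not $f^k(y)\in[y]$ (non-separability is not transitive, as you note), and a peripherally Hausdorff space has no cover by Hausdorff open sets to invoke.

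A second, smaller gap: you propose to apply the induction hypothesis to $f|_{[y]}$, but $[y]$ need not be $f$-invariant, and nothing in your argument shows $f[[y]]\subseteq[y]$. The paper sidesteps this by applying the induction hypothesis to $R:=\operatorname{cl}(\{f^n(y):n\in\mathbb{N}_0\})$, which is $f$-invariant by continuity, is contained in the closed set $[y]$, and is therefore a $\beta$-Hausdorff space for some $\beta<\alpha$; one then checks (by adjoining $X\setminus R$ to any relative open cover of $R$) that $f|_R$ is still a continuous weak\textsuperscript{+} contraction. With these two repairs your outline becomes the paper's proof.
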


\begin{proof} We prove the theorem by a transfinite induction with respect to $\alpha\in\textrm{On}$.

If $\alpha=0$ then $X$ is a Hausdorff space and we apply Theorem \ref{T2-continuous-fixed}.

Now assume that the conclusion holds for every ordinal number $\beta<\alpha$. If $X$ is an $\alpha$-Hausdorff space
and $f:X\to X$ is a weak+ contraction and $x\in X$ then there exists $y\in X$ such that for each neighbourhood $U$ of $y$
there exists $n\in\mathbb{N}$ such that $\{f^n(x),f^{n+1}(x),\ldots\}\subseteq U$ (for otherwise for each $u\in X$ there would exist
its neighbourhood $V(u)$ without this property, and for the open cover $\{V(u):u\in X\}$ of $X$ the mapping $f$ and would not satisfy the definition of a weak+ topological contraction for the pair of points $x,f(x)$). 

\vspace{0.2 cm}

\noindent Claim. $\{f^n(y):n\in\mathbb{N}_0\}\subseteq[y]$.  

\vspace{0.2 cm}

\noindent Proof of Claim 1. If $f(y)=y$ the claim is evidently true. So assume that $f(y)\neq y$. By the continuity of $f$ for every neighbourhood $V$ of $f(y)$ almost all elements of the sequence $f(x),f^2(x),\ldots$ lie in $V$. Hence $f(y)$ and $y$ cannot be separated  by two disjoit open sets and, consequently, $f(y)\in[y]$. If now $f(y)=f^2(y)=f(f(y))$ then the claim is again evidently true.
So assume that $f^2(y)\neq f(y)$. By continuity of $f$ for every neighbourhood $V$ of $f^2(y)$ almost all elements of the sequence $f(x),f^2(x),\ldots$ lie in $V$, which shows that $f^2(y)$ and $y$ cannot be separated by disjoint open sets. Hence $f^2(y)\in[y].$ We proceed by induction proving that $f^n(y)\in [y]$ for all $n\in\mathbb{N}_0$. This ends the proof of the claim. 

\hspace*{\fill}$\diamond$
 
\vspace{0.2 cm}

Let $R:=\textrm{cl}(\{f^n(y):n\in\mathbb{N}_0\})$. By the continuity of $f$ we have  $f[R]\subseteq R$. Because $[y]$ is a closed set we have $R\subseteq[y]$. By the definition of an $\alpha$-Hausdorff space the set $[y]$ is a $\beta$-Hausdorff space for some $\beta<\alpha$ and then also $R$, as a subset of $[y]$, is a $\beta$-Hausdorff space. Thus, by our induction hypothesis, there exists $r\in R$ such that $f(r)=r.$ 

It remains to prove that $r$ is a unique fixed point of $f$. If $r\neq s\in X$ and $f(s)=s$ then for the open cover $\{X\setminus\{r\},X\setminus\{s\}$ and the pair of points $r,s$ $f$ does not satisfy the definition of a weak+ contraction.

This concludes the proof of the theorem

\end{proof}

\section{Hutchinson operators in $T_1$ compact spaces}

In \cite{MMRR} the following theorems were proven (Theorem 6, and Corollary 7 therein, resp.).

\vspace{0.4 cm}

\noindent{\bf Theorem B.} {\it 
Let $X$ be a $T_1$ compact space. Let $\mathcal{F}$ be a contractive IFS on $X$. Then, if the Hutchinson operator $F$ induced by $\mathcal{F}$ is closed, then $F$ is a topological contraction on $2^X$.} 

\vspace{0.4 cm}

\noindent{\bf Theorem C.} {\it
If $X$ is a $T_1$ compact space then every contractive IFS inducing a closed Hutchinson operator has a unique attractor; in other words: the Hutchinson operator induced by this IFS has a unique fixed point.}

\vspace{0.4 cm}

Of course, Theorem C follows from Theorems A and B.

The attractor in the conclusion of Theorem C is unique. One should add that the very existence of an attractor for an IFS on $T_1$ compact space does not require any additional assumptions imposed on the IFS (for instance, that it is contractive). Namely the following theorem (Theorem 5 
in \cite{MMRR}) holds.

\vspace{0.4 cm}

\noindent{\bf Theorem D.} {\it
If $X$ is a $T_1$ compact space then any IFS on $X$ has an attractor; in other words the  Hutchinson operator $F$ induced by $\mathcal{F}$ has a fixed point.}  

\vspace{0.4 cm}

There arises a question whether the Hutchinson operator induced by a contractive IFS is always closed.
We  shall construct an example of a $T_1$ compact topological space and a contractive IFS on this space inducing a Hutchinson operator which is not closed. The existence of such a space and an IFS we state as a theorem.

\begin{thm}\label{Hutchinson-example}
There exists a $T_1$ compact space $X$ and a contractive IFS inducing a Hutchinson operator  which is not closed (as a mapping from $2^X$ to $2^X$). 
\end{thm}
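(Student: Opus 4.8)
The plan is to build $X$ from a countable pool of points together with a single limit point $\infty$, arranged so that a small IFS is forced to be contractive (because every open cover must contain a cofinite neighbourhood of $\infty$) while the union of two images of a closed set can fail to be closed. Concretely, I would take $X=\mathbb{N}_0\times\{0,1\}\cup\{\infty\}$, declare every point of $\mathbb{N}_0\times\{0,1\}$ isolated, and let the neighbourhoods of $\infty$ be the sets $\{\infty\}\cup A$ where $A$ is a cofinite subset of $\mathbb{N}_0\times\{0,1\}$; this makes $X$ a convergent-sequence-like $T_1$ (in fact $T_2$) compact space. The IFS should consist of two maps $f_0,f_1$ that push both "tracks'' toward $\infty$, e.g.\ $f_j(n,i)=(n-1,j)$ for $n\geq 1$, $f_j(0,i)=\infty$, $f_j(\infty)=\infty$, or some similar shift. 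Because every open cover of $X$ has a member that omits only finitely many points, and any long enough composition of the $f_j$'s sends all of $X$ into a cofinite neighbourhood of $\infty$, contractivity of the IFS is automatic; I would verify this by noting $f_{i_1}\circ\cdots\circ f_{i_n}[X]\subseteq\{\infty\}\cup(\{0,\dots\}$-truncated tracks$)$, which is eventually inside any prescribed cofinite set.

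The heart of the construction is arranging the non-closedness of the Hutchinson operator $F(K)=f_0[K]\cup f_1[K]$. For this I want a closed set $K$ whose two images $f_0[K]$ and $f_1[K]$ are each closed but whose union misses a point that is a limit of that union. The standard trick: pick $K$ to be (a copy of) an infinite set that, under $f_0$, lands on the even-indexed points of track $0$ and, under $f_1$, on the odd-indexed points of track $1$, so that $f_0[K]\cup f_1[K]$ is an infinite co-infinite subset of $\mathbb{N}_0\times\{0,1\}$ that accumulates at $\infty$ but does not contain $\infty$; meanwhile $K$ itself is chosen to be closed precisely because it already contains $\infty$ (or because it is finite-to-one arranged so each single image is closed while the union is not). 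I would have to check carefully that each individual $f_j[K]$ is closed — this typically forces $K$ to contain $\infty$, and then one must ensure $\infty\in f_j[K]$, i.e.\ $f_j$ hits $\infty$ from within $K$, so that the images are closed but their union still fails to contain $\infty$ despite $\infty$ being a cluster point; the resolution is to let the closure of the union pick up $\infty$ only "in the limit'' across the two images combined.

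The key steps, in order, are: (1) define $X$ and its topology, check it is $T_1$ (indeed $T_2$) and compact; (2) define $f_0,f_1$, check each is closed as a self-map of $X$; (3) check the IFS is contractive, using that cofinite neighbourhoods of $\infty$ are cofinal in every open cover and that iterated compositions drive $X$ into such neighbourhoods; (4) exhibit the witnessing closed set $K$, compute $f_0[K]$, $f_1[K]$, verify each is closed; (5) compute $F(K)=f_0[K]\cup f_1[K]$ and show $\infty\in\overline{F(K)}\setminus F(K)$, so $F(K)\notin 2^X$, hence $F$ is not even well-defined as a map into closed sets in the naive sense, or — more precisely — produce closed $K$ with $F(K)$ not closed, contradicting closedness of $F$ as a self-map of the hyperspace. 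I expect step (4)–(5) to be the main obstacle: one must simultaneously keep each $f_j[K]$ closed (which tends to pull $\infty$ into each image) and keep $F(K)$ non-closed (which needs $\infty\notin F(K)$), so the maps and $K$ have to be engineered so that the only way $\infty$ enters an image is vacuously avoided — most cleanly by letting $f_0,f_1$ never hit $\infty$ except from $\infty$ itself, taking $K\subseteq\mathbb{N}_0\times\{0,1\}$ infinite with $\infty\notin K$ so $K$ is \emph{not} closed — which breaks the plan — forcing instead the dual design where $f_j$ are not injective and $f_j[K]$ is finite (hence closed) while still $F(K)$ is infinite; I would therefore aim for images that are finite or cofinite individually but whose union is infinite-and-coinfinite, and spend the real work pinning down that combinatorial split.
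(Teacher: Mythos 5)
There is a genuine gap, and it lies at the very start: you have misread what ``not closed (as a mapping from $2^X$ to $2^X$)'' means. It does \emph{not} mean that $F(K)=f_0[K]\cup f_1[K]$ fails to be a closed subset of $X$ for some closed $K$ --- that is impossible under the paper's definitions, since an IFS consists by definition of \emph{closed} mappings, so each $f_j[K]$ is closed and a finite union of closed sets is closed; hence $F$ always maps $2^X$ into $2^X$. You in fact notice this tension in your step (4)--(5) (``keep each $f_j[K]$ closed \dots and keep $F(K)$ non-closed'') but the ``dual design'' you propose cannot resolve it: whether the individual images are finite, cofinite, injective or not, their union is a union of two closed sets and is therefore closed. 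What the theorem asserts is that $F$ fails to be a closed \emph{map between the hyperspaces} equipped with the Vietoris topology, i.e.\ there is a Vietoris-closed family $\mathcal{C}\subseteq 2^X$ with $F[\mathcal{C}]$ not Vietoris-closed. The paper takes $\mathcal{C}=2^X$ itself and exhibits a singleton in $\overline{F[2^X]}\setminus F[2^X]$.

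Your choice of ambient space also points in the wrong direction for this goal. The paper's example is a compact $T_1$ space that is deliberately \emph{not} Hausdorff: $X=\mathbb{N}\cup\{a,b\}$ where every neighbourhood of $a$ or of $b$ is cofinite, so $a$ and $b$ are inseparable. The two maps send every even number to $b$ (resp.\ $a$) and both of $a,b$ to $a$ (resp.\ $b$), so that $F(E)\ni a$ forces $F(E)\ni b$ as well; hence $\{a\}\notin F[2^X]$. Yet every basic Vietoris neighbourhood of $\{a\}$ is built from cofinite open sets, so it contains some singleton $\{2n\}=F(\{2n-1\})$, giving $\{a\}\in\overline{F[2^X]}$. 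This mechanism --- a singleton approximated in the Vietoris topology by singletons of ``far away'' points --- is exactly what the failure of the Hausdorff axiom buys, and it is absent from your convergent-sequence space $\mathbb{N}_0\times\{0,1\}\cup\{\infty\}$. To repair your argument you would need to abandon the search for a non-closed $F(K)$ and instead work in the Vietoris topology on $2^X$, which essentially forces a construction of the paper's type.
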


\begin{proof} Let 
$$\textrm{ODD}=\{1,3,5,\ldots\}$$
and
$$\textrm{EVEN}=\{2,4,6,\ldots\}.$$
Let 
$$X=\mathbb{N}\cup\{a,b\}$$
where $a\neq b$, $a,b\notin\mathbb{N}$, and the set $A\subseteq X$ is open if:

\vspace{0.2 cm}

\noindent i) $A\subset \textrm{ODD}$,

\vspace{0.2 cm}

or  

\vspace{0.2 cm}

\noindent ii) $A\subseteq\mathbb{N}$ and the set $\textrm{ODD}\setminus A$ is finite,

\vspace{0.2 cm}

or  

\vspace{0.2 cm}

\noindent iii) $A\cap\{a,b\}\neq\emptyset$ and $A$ is a co-finite set in $X$.

\vspace{0.2 cm}

We define an IFS $\mathcal{F}$ as 
$$\mathcal{F}=\{f,g\},$$
where 
$$f(x)=\left\{
\begin{array}{ccc}
a&\textrm{if}&x=a,b,\\
b&\textrm{if}&x=2n,n\in\mathbb{N},\\
2n&\textrm{if}&x=2n-1,n\in\mathbb{N},\\
\end{array} 
\right.
$$
and
$$g(x)=\left\{
\begin{array}{ccc}
b&\textrm{if}&x=a,b,\\
a&\textrm{if}&x=2n,n\in\mathbb{N},\\
2n&\textrm{if}&x=2n-1,n\in\mathbb{N}.
\\
\end{array} 
\right.
$$

\vspace{0.2 cm}

Let $E\subseteq X$ be a nonempty closed set. If $E$ is finite,
then $f[E]$ is also finite and, therefore, closed. 
If $E$ is infinite, then $a,b\in E$ and $E\cap\textrm{EVEN}\neq\emptyset$.
Then $\{a,b\}\subseteq f[E]\subseteq\{a,b\}\cup\textrm{EVEN}$ which is a closed set. 

Thus $f$ is a closed mapping. Analogously, one argues that $g$ is closed.

Now let us consider a composition $h_n\circ h_{n-1}\circ\ldots\circ h_2\circ h_1$, where each $h_i$ is equal to either $f$ or $g$.
Because $f[X]=g[X]=\textrm{EVEN}\cup\{a,b\}$ we have $h_1[X]=\textrm{EVEN}\cup\{a,b\}$. Because 
$$f[\textrm{EVEN}\cup\{a,b\}]=g[\textrm{EVEN}\cup\{a,b\}]=\{a,b\}$$
we have 
$$h_2\circ h_1[X]=\{a,b\}.$$
Because $f[\{a,b\}]=\{a\}$ and $g[\{a,b\}]=\{b\}$ we have for $n\geq3$
$$h_n\circ h_{n-1}\circ\ldots\circ h_2\circ h_1[X]=\{a\}$$
or
$$h_n\circ h_{n-1}\circ\ldots\circ h_2\circ h_1[X]=\{b\},$$
hence the IFS $\mathcal{F}=\{f,g\}$ is contractive.

Let $F$ be the Hutchinson operator induced by $\mathcal{F}$, namely $F:2^X\to2^X$ is defined as 
$$F(E):=f[E]\cup g[E].$$
We shall show that $F$ is not closed as a mapping from $2^X$ to $2^X$.

The one point set $\{a\}$ is not in the image $F[2^X]$ of the hyperspace $2^X$ via $F$. 
Thus it is enough to show that $\{a\}\in\overline{F[2^X]}$. Let $S(V_0;V_1,\ldots,V_k)$ be a base neigbourhood of $\{a\}$ which simply means here that $a\in\bigcap_{i=0}^kV_i$. Thus the set $\bigcap_{i=0}^kV_i$ is an open neighbourhood of $a$. Hence it must be a co-finite subset of $X$ and $2n\in\bigcap_{i=0}^kV_i$, for some $n\in\mathbb{N}$. This implies that $\{2n\}\in S(V_0;V_1,\ldots,V_k)$. We also have $\{2n\}=F(\{2n-1\})$. Hence $S(V_0;V_1,\ldots,V_k)\cap F[2^X]\neq\emptyset$. We conclude that $\{a\}\in \overline{F[2^X]}\setminus F[2^X]$.  

\end{proof}

We have shown that the Hutchinson operator of a contractive IFS on a compact $T_1$ space may not be closed and then it is not a topological contraction, because this is one of the conditions defining topological contraction. To apply directly Theorem A we assumed in \cite{MMRR} in the hypothesis of Theorem C that the Hutchinson operator was closed. It turns out, however, that to get the conclusion about the unique fixed point of the Hutchinson operator induced by a contractive IFS the assumption that the Hutchinson operator is closed is not necessary. Namely, we prove here the following stronger version of Theorem C.

\begin{thm}\label{attractor2}
If $X$ is a $T_1$ compact space then every contractive IFS has a unique attractor; in other words: the Hutchinson operator induced by this IFS has a unique fixed point.
\end{thm}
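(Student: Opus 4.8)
The plan is to reduce the statement to the already-established machinery without using Theorem B at all, since the whole point of Theorem \ref{Hutchinson-example} is that the Hutchinson operator $F$ need not be closed. The key observation is that the sequence of iterated images $(F^n[X])_n$ — more precisely the descending chain obtained by iterating $F$ starting from the whole space $X$, viewed as an element of $2^X$ — stabilizes in a strong enough sense to pin down the fixed point directly. First I would record that for a contractive IFS $\mathcal{F}=\{f_1,\dots,f_m\}$ the set $F^n(X)=\bigcup\{f_{i_1}\circ\dots\circ f_{i_n}[X]: 1\le i_j\le m\}$ is a finite union of closed sets (each $f_i$ is closed), hence closed, hence a genuine element of $2^X$; moreover contractivity says exactly that for every open cover $\mathcal U$ of $X$ there is $n$ with each $f_{i_1}\circ\dots\circ f_{i_n}[X]$ contained in some member of $\mathcal U$, so $F^n(X)$ is a finite union of "small" sets.

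Next I would exploit compactness of $X$ to extract a candidate attractor. Set $A:=\bigcap_n F^n(X)$. Since $(F^n(X))_n$ is a decreasing sequence of nonempty closed subsets of the compact space $X$ (decreasing because $f_i[X]\subseteq X$ gives $F(X)\subseteq X$ and $F$ is monotone), $A$ is nonempty and closed, so $A\in 2^X$. The heart of the argument is to show $F(A)=A$. The inclusion $F(A)\subseteq A$ is immediate from monotonicity: $F(A)\subseteq F(F^n(X))=F^{n+1}(X)$ for every $n$, hence $F(A)\subseteq\bigcap_n F^{n+1}(X)=A$. For the reverse inclusion $A\subseteq F(A)$ I would argue by contradiction: suppose $x\in A\setminus F(A)$. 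Since $F(A)=\bigcup_i f_i[A]$ is closed (finite union of closed sets) and $X$ is $T_1$, I can separate — here is where the contractivity plus compactness must be used. I would cover $X$ by the open set $U_0:=X\setminus\{x\}$ together with, for each $i$, an open set carving $f_i[A]$ away from... — more carefully: consider for each point $z\in F(A)$ a neighborhood, and cover $X$ by $X\setminus\{x\}$ — but $X\setminus\{x\}$ alone may not be enough. The cleaner route: take the open cover $\mathcal U=\{X\setminus\{x\},\, X\setminus F(A)\}$? No — $F(A)$ need not be missed by anything. Instead, since $f_i$ closed and $x\notin f_i[A]$, and we want to reach a contradiction with $x\in F^{n+1}(X)$ for all $n$: pick $n$ from contractivity applied to a suitable cover so that $F^{n+1}(X)=F(F^n(X))=\bigcup_i f_i[F^n(X)]$, and each $f_i[F^n(X)]$ sits inside $f_i$ of a small set; as $F^n(X)$ shrinks toward $A$, push this to show $x$ must actually lie in some $f_i[A]$.

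The main obstacle, and where I would spend the real effort, is exactly this reverse inclusion $A\subseteq F(A)$: proving that the intersection of the iterated images is itself an attractor rather than merely a superset of one. The delicate point is that $F$ is not assumed closed, so I cannot simply say $F(\bigcap_n F^n(X))=\bigcap_n F(F^n(X))$ — continuity/closedness of $F$ on the hyperspace is precisely what fails. I expect to need a direct point-set argument: given $x\in A$, for each $n$ write $x=f_{i(n)}(y_n)$ with $y_n\in F^{n-1}(X)$ and $i(n)\in\{1,\dots,m\}$; pass to a subsequence with $i(n)\equiv i$ constant (pigeonhole, $m$ finite), and use compactness of $X$ to get a cluster point $y$ of $(y_n)$ — but then I need $y\in A$ and $f_i(y)=x$, and $f_i$ is only closed, not continuous, so I would instead argue that $y\in\overline{\{y_n:n\ge k\}}\subseteq F^{k-1}(X)$ for every $k$ hence $y\in A$, and that $x\in f_i[\{y_n:n\ge k\}\cup\{y\}]$'s closure; since $f_i$ is a closed map, $f_i$ of the closed set $\{y_n:n\ge k\}\cup\{y\}$ is closed and contains all $f_i(y_n)=x$, so it contains its only accumulation behavior — giving $x=f_i(y')$ for some $y'$ in that set, and by intersecting over $k$ forcing $y'\in A$. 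Once $F(A)=A$ is established, uniqueness follows exactly as in the proof of Theorem \ref{Banach-T1-complete}: if $B$ is another attractor then $B=F^n(B)\subseteq F^n(X)$ for all $n$ so $B\subseteq A$, and a symmetric/minimality argument using contractivity against the cover $\{X\setminus b : b\in ?\}$ — more simply, apply Theorem A (or rather its proof scheme) to conclude there is only one fixed point because two distinct fixed closed sets $A\neq B$ would, via a point $p$ in the symmetric difference, violate contractivity of the IFS on the cover $\{X\setminus\{p\},X\}$ suitably refined. I would write the uniqueness part to mirror Theorem \ref{Banach-T1-complete} as closely as possible.
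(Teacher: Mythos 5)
Your plan diverges from the paper at the existence step: the paper simply imports existence of \emph{some} attractor from Theorem D (Theorem 5 of \cite{MMRR}, valid for an arbitrary IFS on a $T_1$ compact space) and then devotes the whole proof to uniqueness, whereas you try to manufacture the attractor as $A:=\bigcap_n F^n(X)$ and prove $F(A)=A$ directly. The inclusion $F(A)\subseteq A$ is fine, but your argument for $A\subseteq F(A)$ has a genuine gap exactly where you predicted trouble. After the pigeonhole step you have $x=f_i(y_n)$ with $y_n\in F^{n-1}(X)$ for infinitely many $n$, i.e.\ $x\in\bigcap_n f_i[F^n(X)]$, and you need $x\in f_i\bigl[\bigcap_n F^n(X)\bigr]$. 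A merely closed map does not permit this exchange: the fibre $f_i^{-1}(\{x\})$ need not be closed (take $X=\{0\}\cup\{1/n:n\in\mathbb{N}\}$, $f(1/n)=0$, $f(0)=1$; every image is a subset of $\{0,1\}$, so $f$ is closed, yet $f^{-1}(\{0\})$ is not closed), so the decreasing sets $f_i^{-1}(\{x\})\cap F^n(X)$ are not closed and compactness gives you nothing. Your fallback --- that $f_i$ of the closed set $\overline{\{y_n:n\ge k\}}$ is closed and ``contains its accumulation behavior'' --- proves nothing, since $f_i[\{y_n:n\ge k\}]=\{x\}$ is already closed and the accumulation points of the $y_n$ may be sent anywhere. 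Ironically, the cover you explicitly considered and then rejected, $\mathcal U=\{X\setminus\{x\},\,X\setminus F(A)\}$, is the one that closes this gap: it is a cover because $x\notin F(A)$ and $F(A)$ is closed; contractivity yields $n$ with each length-$n$ composition image inside one member; the composition whose image contains $x\in F^n(X)$ cannot sit in $X\setminus\{x\}$, so its image misses $F(A)$; but its image of $A$ is nonempty and contained in $F^n(A)\subseteq F(A)$ (using the already-proved $F(A)\subseteq A$), a contradiction.

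The uniqueness half is also not yet a proof as written. The cover $\{X\setminus\{p\},X\}$ is vacuous (every image lies in $X$), and ``suitably refined'' is doing all the work. The paper's Proof~1 shows what the refinement must be: for distinct fixed points $E_1,E_2$ and $z\in E_1\setminus E_2$, take the cover $\{X\setminus E_2,\;X\setminus\{z\}\}$; contractivity forces the composition that produces $z$ from a point of $E_1=F^n(E_1)$ to map all of $X$, hence all of $E_2$, into $X\setminus E_2$, contradicting $F^n(E_2)=E_2$. Your observation that any fixed point $B$ satisfies $B\subseteq\bigcap_nF^n(X)$ is correct and would combine cleanly with that cover argument, but as it stands both halves of your proof rely on steps that are either unproved or explicitly misdirected.
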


We present two proofs of this theorem. In both proofs the existence of at least one fixed point of the Hutchinson operator induced by any IFS (\cite{MMRR}) is used  but the first proof is rather direct; the second and shorter one depends more heavily on some facts proven in \cite{MMRR}.

\vspace{0.4 cm}

\noindent {\it Proof 1.} Let $\mathcal{F}=\{f_1.\ldots,f_m\}$ be a contractive IFS. Let $F$ be the Hutchinson operator induced by $\mathcal{F}$. By Theorem 5 in \cite{MMRR} $F$ has a fixed point. Let us assume that $E_1,E_2\in2^X$ are two different fixed points of $F$. Let, e.g., $z\in E_1\setminus E_2$.
Let $U=E_2^c$ and $V=\{z\}^c$. We have $U\cup V=X$. Because $\mathcal{F}$ is a contractive IFS there exists $n\in\mathbb{N}$ such that for each sequence $(i_1,\ldots,i_n)\in\{1,\ldots,m\}^n$
$$f_{i_n}\circ\ldots\circ f_{i_1}[X]\subseteq U \;\;\textrm{or}\;\;f_{i_n}\circ\ldots\circ f_{i_1}[X]\subseteq V.$$
Because $E_1$ is a fixed point of $F$ we have $F^n[E_1]=E_1$. Hence there exists $s\in E_1$ and a sequence $(i_1,\ldots,i_n)\in\{1,\ldots,m\}^n$ such that 
$$f_{i_n}\circ\ldots\circ f_{i_1}(s)=z$$
and this implies
$$f_{i_n}\circ\ldots\circ f_{i_1}[X]\subseteq U.$$
Because 
$$f_{i_n}\circ\ldots\circ f_{i_1}[E_2]\subseteq f_{i_n}\circ\ldots\circ f_{i_1}[X]$$
we infer that
$$f_{i_n}\circ\ldots\circ f_{i_1}[E_2]\subseteq U.$$
Because $U\cap E_2=\emptyset$ we obtain 
$$f_{i_n}\circ\ldots\circ f_{i_1}[E_2]\setminus E_2\neq\emptyset$$.
Hence
$$F(E_2)\setminus E_2 =\bigcup_{(j_1,\ldots,j_n)\in\{1,\ldots,m\}^m}f_{j_n}\circ\ldots\circ f_{j_1}\setminus E_2\neq\emptyset,$$
which is an obvious contradiction with $E_2$ being a fixed point of $F$. This concludes the proof.

$\,$\hfill$\Box$

\vspace{0.4 cm}

\noindent{\it Proof 2.} By inspection of the proof of Theorem 4 in \cite{MMRR} (here Theorem B above) one can see that the assumption that the Hutchinson operator was closed was not used to prove that it had property $(\star)$.  

Let $F$ be the Hutchinson operator induced by our IFS $\mathcal{F}$. Let $E_1$ and $E_2$ be two different fixed points of $F$. 
We have $F^n(E_1)=E_1$ and $F^n(E_2)=E_2$, for each $n\in\mathbb{N}$. Hence 
\begin{equation}\label{inclusion}
\{E_1,E_2\}\subseteq F^n[2^X].
\end{equation}
The pair of sets $U=\{E_1\}^c$, $V=\{E_2\}^c$ is an open cover of $2^X$ and, because $F$ satisfies $(\star)$, for some $n$ either $F^n[2^X]\subseteq U$ or $F[2^X]\subseteq V$, but this contradicts (\ref{inclusion}).  

$\,$\hfill$\Box$

{}


\begin{thebibliography}{}
\bibitem{A} M. Atsuji, Uniform continuity of continuous functions of metric spaces, Pacific Journal of Mathematics  8 (1958), 11-16.
\bibitem{B} G. Beer, More about metric spaces on which continuous functions are uniformly continuous, Bulletin of the Australian Mathematical Society 33 (1986), 397--406.
\bibitem{E} R. Engelking, General Topology, Państwowe Wydawnictwo Naukowe, Warszawa 1977.
\bibitem{H} J. Hutchinson, Fractals and self-similarity, Indiana University Mathematics Journal 30  (1981), 713--747.
\bibitem{Mi} E. Michael, Topologies on spaces of subsets, Transactions of the American Mathematical Society 71 (1951), 152--182.
\bibitem{MMRR} M. Morayne, R. Ra{\l}owski, The Baire theorem, an analogue of the Banach fixed point theorem and attractors in $T_1$ compact spaces, Bulletin des Sciences Mathematiques 183 (2023). 
\bibitem{R} W. Rudin, Functional Analysis, McGraw-Hill, Inc., 1991.
\bibitem{M} J. Munkers, Topology, Prentice Hall 2000.

\end{thebibliography}
\end{document}